\documentclass{amsart}
\usepackage{amsmath,amssymb}
\usepackage[dvips]{graphics}
\usepackage[dvipdfmx]{graphicx}
\usepackage[all]{xy}


\newtheorem{Theorem}{Theorem}[section]
\newtheorem{Lemma}[Theorem]{Lemma}
\newtheorem{Proposition}[Theorem]{Proposition}

\theoremstyle{definition}

\theoremstyle{remark}
\newtheorem{Remark}[Theorem]{Remark}



\makeatletter
\@addtoreset{figure}{section}
\def\@thmcountersep{-}
\makeatother


\numberwithin{equation}{section}



\begin{document}

\title[Capturing links in spatial complete graphs]{Capturing links in spatial complete graphs}

\author{Ryo Nikkuni}
\address{Department of Mathematics, School of Arts and Sciences, Tokyo Woman's Christian University, 2-6-1 Zempukuji, Suginami-ku, Tokyo 167-8585, Japan}
\email{nick@lab.twcu.ac.jp}
\thanks{The author was supported by JSPS KAKENHI Grant Number JP19K03500.}


\subjclass{Primary 57M15; Secondary 57K10}

\date{}


\keywords{Spatial graphs, Intrinsic linkedness, Minimal linkedness}

\begin{abstract}
We say that a set of pairs of disjoint cycles $\Lambda(G)$ of a graph $G$ is linked if for any spatial embedding $f$ of $G$ there exists an element $\lambda$ of $\Lambda(G)$ such that the $2$-component link $f(\lambda)$ is nonsplittable, and also say minimally linked if none of its proper subsets are linked. In this paper, (1) we show that the set of all pairs of disjoint cycles of $G$ is minimally linked if and only if $G$ is essentially same as a graph in the Petersen family, and (2) for any two integers $p,q\ge 3$, we exhibit a minimally linked set of Hamiltonian $(p,q)$-pairs of cycles of the complete graph $K_{p+q}$ with at most eighteen elements. 
\end{abstract}

\maketitle

\section{Introduction}\label{intro}

Throughout this paper we work in the piecewise linear category. An embedding $f$ of a finite graph $G$ into the $3$-sphere is called a {\it spatial embedding} of $G$, and $f(G)$ is called a {\it spatial graph} of $G$. We denote the set of all spatial embeddings of $G$ by ${\rm SE}(G)$. Two spatial graphs $f(G)$ and $g(G)$ are said to be {\it ambient isotopic} and denoted by $f(G)\cong g(G)$ if there exists an orientation-preserving self-homeomorphism $\Phi$ of the $3$-sphere such that $\Phi(f(G)) = g(G)$. We call a subgraph of $G$ homeomorphic to the circle a {\it cycle} of $G$, and a cycle containing exactly $p$ vertices a {\it $p$-cycle}. We denote the set of all pairs of disjoint cycles of $G$ by $\Gamma^{(2)}(G)$, and the subset of $\Gamma^{(2)}(G)$ consisting of all pairs of a $p$-cycle and a $q$-cycle by $\Gamma_{p,q}(G)$. We call an element of $\Gamma_{p,q}(G)$ a {\it $(p,q)$-pair of cycles} of $G$. For an element $\lambda$ of $\Gamma^{(2)}(G)$ and an element $f$ of ${\rm SE}(G)$, we call $f(\lambda)$ a {\it constituent $2$-component link} of $f(G)$, and if $\lambda$ is a $(p,q)$-pair of cycles, then we also say that $f(\lambda)$ is {\it of type $(p,q)$}. In particular, if $\lambda$ contains all vertices of $G$, then we call $\lambda$ a {\it Hamiltonian pair of cycles} and $f(\lambda)$ a $2$-component {\it Hamiltonian link} of $f(G)$. 

A graph $G$ is said to be {\it intrinsically linked} if for any element $f$ in ${\rm SE}(G)$ there exists an element $\lambda$ of $\Gamma^{(2)}(G)$ such that $f(\lambda)$ is a nonsplittable $2$-component link. Conway--Gordon and Sachs independently proved that $K_{6}$ is intrinsically linked \cite{CG83}, \cite{S84}, where $K_{n}$ is the {\it complete graph} on $n$ vertices, that is the loopless graph consisting of $n$ vertices, a pair of whose distinct vertices is connected by exactly one edge. Actually they showed that every spatial graph $f(K_{6})$ has a nonsplittable Hamiltonian link of type $(3,3)$. In the case of $n\ge 7$, since $K_{n}$ contains a subgraph $H$ isomorphic to $K_{6}$, every spatial graph $f(K_{n})$ also has a nonsplittable constituent $2$-component link of type $(3,3)$. On the other hand, for any integer $n\ge 6$, Vesnin--Litvintseva showed that every spatial graph $f(K_{n})$ has a nonsplittable $2$-component Hamiltonian link \cite{YL10}. Moreover, the following is also known. 

\begin{Theorem}\label{ham_link}{\rm (Morishita--Nikkuni \cite{MN19_b})} 
Let $p,q\ge 3$ be two integers. Then every spatial graph $f(K_{p+q})$ has a nonsplittable Hamiltonian link of type $(p,q)$. 
\end{Theorem}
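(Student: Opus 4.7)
The plan is to establish a Conway--Gordon type mod-$2$ congruence. For $f \in {\rm SE}(K_{p+q})$ I would consider
$$
\omega_{p,q}(f) \;=\; \sum_{\lambda \in \Gamma_{p,q}(K_{p+q})} {\rm lk}(f(\lambda))^{2} \pmod{2},
$$
and note that every element of $\Gamma_{p,q}(K_{p+q})$ is automatically a Hamiltonian $(p,q)$-pair because $p+q$ equals the total number of vertices. Since $x^{2} \equiv x \pmod{2}$, this is also the mod-$2$ sum of the linking numbers of the Hamiltonian links of type $(p,q)$ of $f(K_{p+q})$. The goal is to show that $\omega_{p,q}(f) \equiv 1 \pmod{2}$ for every $f$; if so, then at least one ${\rm lk}(f(\lambda))$ must be odd, which forces $f(\lambda)$ to be a nonsplittable Hamiltonian link of type $(p,q)$.

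First I would verify that $\omega_{p,q}$ descends to an invariant of the ambient isotopy class of $f(K_{p+q})$. A crossing change between two edges sharing a common vertex leaves every constituent linking number unchanged, so only crossing changes between disjoint edges $e, e' \in E(K_{p+q})$ matter. Such a crossing change flips the parity of ${\rm lk}(f(\lambda))$ exactly for those $\lambda$ in which $e$ and $e'$ lie in distinct cycles, and expanding ${\rm lk}^{2}$ mod $2$ shows that the change in $\omega_{p,q}$ equals $|S(e, e')| \pmod 2$, where
$$
S(e, e') \;=\; \{\lambda \in \Gamma_{p,q}(K_{p+q}) \,:\, e \text{ and } e' \text{ lie in distinct cycles of } \lambda\}.
$$
The combinatorial lemma I would establish is that $|S(e, e')|$ is even for every disjoint pair $e, e'$. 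When $p \neq q$, the cases ``$e$ lies in the $p$-cycle'' and ``$e$ lies in the $q$-cycle'' contribute equally, so the total is twice an integer. When $p = q$ a direct enumeration yields $|S(e, e')| = \binom{2p-4}{p-2}\,((p-2)!/2)^{2} = (2p-4)!/4$ for $p \geq 4$, and $|S(e, e')| = 2$ for $p = 3$, both of which are even.

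The second step, and the principal obstacle, is to exhibit a standard embedding $f_{0}$ of $K_{p+q}$ for which $\omega_{p,q}(f_{0}) \equiv 1 \pmod{2}$. The base case $(p,q) = (3,3)$ is the classical Conway--Gordon--Sachs computation on $K_{6}$. For general $p, q$ I would proceed by induction on $p+q$: start from a standard embedding of $K_{p+q-1}$, attach a new vertex via a cone of straight edges placed close to a fixed vertex, and analyze the parity contributions of the newly created Hamiltonian pairs. Each such new pair is obtained by inserting the new vertex into a cycle of a Hamiltonian pair of $K_{p+q-1}$, and a careful bookkeeping should relate $\omega_{p,q}$ on $K_{p+q}$ to analogous invariants on its vertex-deleted subgraphs. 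The technical core of the proof lies in arranging this recursion so that it collapses to the single odd contribution supplied by the $K_{6}$ base case; alternatively, one could try a direct evaluation on an explicit book embedding of $K_{p+q}$, where linking numbers can be read off from the combinatorics of the page orderings. This enumeration step is by far the most delicate part of the argument.
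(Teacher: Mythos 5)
You have correctly identified the route: the paper at hand does not actually prove Theorem \ref{ham_link} --- it quotes it from Morishita--Nikkuni \cite{MN19_b} --- and the cited proof is indeed a Conway--Gordon type parity argument establishing $\sum_{\lambda\in\Gamma_{p,q}(K_{p+q})}\mathrm{lk}(f(\lambda))^{2}\equiv 1\pmod{2}$. Your invariance step is essentially sound: a crossing change between disjoint edges $e,e'$ changes the sum by $\sharp S(e,e')$ modulo $2$, and $\sharp S(e,e')$ is even in every case. One slip that happens not to matter: the number of Hamiltonian cycles on $m$ labeled vertices through a fixed edge is $(m-2)!$, not $(m-2)!/2$, so for $p=q$ the correct count is $\binom{2p-4}{p-2}\bigl((p-2)!\bigr)^{2}=(2p-4)!$ rather than $(2p-4)!/4$; this is still even for all $p\ge 3$, and for $p\ne q$ your symmetry argument is right because $\binom{p+q-4}{p-2}=\binom{p+q-4}{q-2}$.

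The genuine gap is your second step. Invariance only shows that $\omega_{p,q}(f)$ is a constant $c_{p,q}\in{\mathbb Z}_{2}$ independent of $f$; nothing you have written forces $c_{p,q}=1$, and the theorem would not follow from your framework if $c_{p,q}=0$ (the congruence is strictly stronger than the statement, so you cannot infer it from the theorem's truth). The base evaluation is therefore the entire mathematical content, and you leave it as two alternatives flagged as undone. Worse, the induction you sketch has a concrete unaddressed obstruction: if the new vertex $v$ is coned off near an existing vertex $u$, then the Hamiltonian $(p,q)$-pairs in which $v$ and $u$ lie in \emph{different} components degenerate, as $v$ approaches $u$, to pairs of cycles sharing the vertex $u$; their linking numbers are not constituent links of the spatial $K_{p+q-1}$ and are not controlled by the inductive invariant, so these cross terms must be shown to cancel modulo $2$ or be computed outright. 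In addition, deleting $v$ changes the type from $(p,q)$ to $(p-1,q)$ or $(p,q-1)$, so the recursion couples different invariants $\omega_{p',q'}$ rather than closing on a single one. It is worth contrasting this with how the present paper handles its analogous computation (Lemma \ref{cg_gen}): it deliberately avoids the global enumeration over $\Gamma_{p,q}(K_{p+q})$ by choosing a set $\Lambda(G)$ of at most eighteen pairs in the subgraphs $G_{8},G_{9},G_{10}$, arranged so that a standard embedding exhibits exactly one Hopf link among their images, making the base evaluation trivial; the Hamiltonian $(p,q)$-links in $K_{p+q}$ are then recaptured by pushing forward along minors. Until your recursion or book-embedding enumeration is actually carried out, you have a correct framework but not a proof.
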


As the number of vertices $n$ increases, the number of the nonsplittable $2$-component Hamiltonian links of $f(K_{n})$ also fairly increases and their behavior seems to be elusive, see \cite{FM09}, \cite{AMT13} for example. Our purpose in this paper is to ensure that we capture a nonsplittable Hamiltonian link of any possible type $(p,q)$ in a spatial complete graph on $n\ge 6$ vertices by the image of a smaller family of Hamiltonian $(p,q)$-pairs of cycles. Let $\Lambda(G)$ be a subset of $\Gamma^{(2)}(G)$. We say that $\Lambda(G)$ is {\it linked} if for any element $f$ in ${\rm SE}(G)$ there exists an element $\lambda$ of $\Lambda(G)$ such that the $2$-component link $f(\lambda)$ is nonsplittable. For example, Theorem \ref{ham_link} says that $\Gamma_{p,q}(K_{p+q})$ is linked for any integers $p,q\ge 3$. Note that $G$ is intrinsically linked if and only if there exists a linked subset $\Lambda(G)$ of $\Gamma^{(2)}(G)$. Moreover, we say that a linked set $\Lambda(G)$ is {\it minimally linked} if every proper subset of $\Lambda(G)$ is not linked \cite[\S 5]{ildt12}. By the definition of minimal linkedness, it is clear that every linked set of pairs of cycles includes a minimally linked subset. In addition, we also have the following, which properly represents the characteristics of the minimally linked set of pairs of cycles.

\begin{Proposition}\label{uni_link}
Let $\Lambda(G)$ be a subset of $\Gamma^{(2)}(G)$ that is linked. Then $\Lambda(G)$ is minimally linked if and only if for any element $\lambda$ in $\Lambda(G)$ there exists an element $f_{\lambda}$ of ${\rm SE}(G)$ such that $f_{\lambda}(\lambda)$ is a nonsplittable link and $f_{\lambda}(\lambda')$ is a split link for any element $\lambda'$ in $\Lambda(G)\setminus \{\lambda\}$. 
\end{Proposition}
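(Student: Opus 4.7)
The plan is to prove Proposition \ref{uni_link} by directly unwinding the definitions of \emph{linked} and \emph{minimally linked}; the standing hypothesis that $\Lambda(G)$ itself is linked will be the only input beyond pure set logic. I would handle the two implications separately.

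For the ``only if'' direction, I would start from the assumption that $\Lambda(G)$ is minimally linked and fix an arbitrary $\lambda\in\Lambda(G)$. By minimality the proper subset $\Lambda(G)\setminus\{\lambda\}$ fails to be linked, so by definition there must exist a spatial embedding $f_{\lambda}\in{\rm SE}(G)$ for which $f_{\lambda}(\lambda')$ is a split link for every $\lambda'\in\Lambda(G)\setminus\{\lambda\}$. I would then apply the hypothesis that $\Lambda(G)$ is linked to this specific embedding $f_{\lambda}$: some constituent link $f_{\lambda}(\mu)$ with $\mu\in\Lambda(G)$ must be nonsplittable, and since every $\mu\neq\lambda$ has already been ruled out, the captured element must be $\lambda$ itself. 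This $f_{\lambda}$ is the required embedding.

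For the ``if'' direction, I would take the family $\{f_{\lambda}\}_{\lambda\in\Lambda(G)}$ as given and let $\Lambda'$ be an arbitrary proper subset of $\Lambda(G)$. Choosing any $\lambda_{0}\in\Lambda(G)\setminus\Lambda'$, I observe that $\Lambda'\subseteq\Lambda(G)\setminus\{\lambda_{0}\}$, so the embedding $f_{\lambda_{0}}$ sends every element of $\Lambda'$ to a split link. Hence $\Lambda'$ is not linked, which establishes minimality.

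There is no substantive obstacle here: the proposition is essentially a logical reformulation obtained by negating ``linked'' one element at a time and then combining with the linked hypothesis. The only point requiring care is that in the forward direction one does not know a priori which element of $\Lambda(G)$ is captured by the witnessing embedding $f_{\lambda}$, and it is precisely the linked hypothesis on the full set $\Lambda(G)$ that forces the captured element to be the chosen $\lambda$.
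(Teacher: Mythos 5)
Your proof is correct and follows essentially the same route as the paper's: in the ``only if'' direction you extract the witnessing embedding from the failure of $\Lambda(G)\setminus\{\lambda\}$ to be linked and use the linkedness of $\Lambda(G)$ to conclude $f_{\lambda}(\lambda)$ is nonsplittable, exactly as the paper does. Your ``if'' direction merely spells out a step the paper leaves implicit (that an arbitrary proper subset $\Lambda'$ sits inside some $\Lambda(G)\setminus\{\lambda_{0}\}$, so the single embedding $f_{\lambda_{0}}$ already witnesses its non-linkedness), which is a welcome clarification but not a different argument.
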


\begin{proof}
First we show the `if' part. By the assumption, $\Lambda(G)$ is linked and $\Lambda(G)\setminus \{\lambda\}$ is not linked for any element $\lambda$ in $\Lambda(G)$. Thus $\Lambda(G)$ is minimally linked. Next we show the `only if' part. Assume that $\Lambda(G)$ is minimally linked. Then for any element $\lambda$ in $\Lambda(G)$, the set $\Lambda(G)\setminus \{\lambda\}$ is not linked. Thus there exists an element $f_{\lambda}$ of ${\rm SE}(G)$ such that $f_{\lambda}(\lambda')$ is a split link for any element $\lambda'$ in $\Lambda(G)\setminus \{\lambda\}$. Since $\Lambda(G)$ is linked, the link $f_{\lambda}(\lambda)$ must be nonsplittable. 
\end{proof}

Our first result in this paper is to reveal the relationship between the minimality of linked set of pairs of cycles and the minor-minimality of the intrinsic linkedness. An {\it edge contraction} on a graph is an operation that contracts an edge $e$ of the graph which is not a loop to a new vertex $v$ as illustrated in Fig. \ref{edgecont}. The reverse operation of an edge contraction is called a {\it vertex splitting}. We say that an edge contraction (resp. vertex splitting) is {\it topologically trivial} if it does not change the topological type of the graph, or equivalently, the degree of either terminal vertices of $e$ is $2$. A graph $H$ is called a {\it minor} of a graph $G$ if there exists a subgraph $G'$ of $G$ such that $H$ is obtained from $G'$ by finite number of edge contractions. In particular, $H$ is called a {\it proper minor} of $G$ if $H\neq G$. 
\begin{figure}[htbp]
\begin{center}
\scalebox{0.5}{\includegraphics*{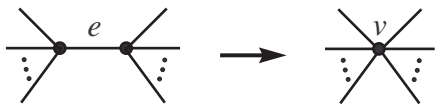}}
\caption{Edge contraction}
\label{edgecont}
\end{center}
\end{figure}
An intrinsically linked graph $G$ is said to be {\it minor-minimal} if every proper minor of $G$ is not intrinsically linked. It is well-known that every intrinsically linked graph has a minor-minimal intrinsically linked graph as a minor, and all minor-minimal intrinsically linked graphs are only graphs in the {\it Petersen family} that is a family of exactly seven graphs as illustrated in Fig. \ref{capture_link2} \cite{RST95}. Then the following says that for an intrinsically linked graph $G$, the minimality of $\Gamma^{(2)}(G)$ and the minor-minimality of $G$ are equivalent.

\begin{Theorem}\label{lk_min_petersen}
Let $G$ be a graph with no vertices of degree $0$ or $1$. Then $\Gamma^{(2)}(G)$ is minimally linked if and only if $G$ is homeomorphic to one of the graphs in the Petersen family. 
\end{Theorem}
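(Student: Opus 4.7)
The plan is as follows. Since suppressing any degree-$2$ vertex preserves both the hypothesis that $\Gamma^{(2)}(G)$ is minimally linked and the conclusion that $G$ is homeomorphic to a graph in the Petersen family, I may assume throughout that $G$ has minimum degree at least $3$; under this normalization the conclusion becomes that $G$ equals one of the seven graphs in the Petersen family.

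For the ``if'' direction, let $P$ be any Petersen family graph. Then $P$ is intrinsically linked by \cite{RST95}, so $\Gamma^{(2)}(P)$ is linked, and by Proposition \ref{uni_link} it suffices to produce, for each $\lambda\in\Gamma^{(2)}(P)$, a spatial embedding $f_{\lambda}$ of $P$ in which $f_{\lambda}(\lambda)$ is nonsplittable while $f_{\lambda}(\lambda')$ is split for every other $\lambda'\in\Gamma^{(2)}(P)$. For $P=K_{6}$ such separating embeddings are classical (place the six vertices so that exactly one pair of triangles forms a Hopf link and every other pair is visibly split), and the vertex-transitivity of $K_{6}$ propagates one such embedding to all ten disjoint pairs. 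The remaining six Petersen family graphs are obtained from $K_{6}$ by $\triangle Y$ and $Y\triangle$ exchanges, and the corresponding separating embeddings transport along such exchanges.

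For the ``only if'' direction, suppose $\Gamma^{(2)}(G)$ is minimally linked, so $G$ is intrinsically linked, and by \cite{RST95} some Petersen family graph $P$ is a minor of $G$. Assume for contradiction that $P$ is a proper minor of $G$, and let $e\in E(G)$ be the edge operated on in the first step of a reduction sequence from $G$ to $P$; then $G':=G\setminus e$ or $G':=G/e$ still has $P$ as a minor and is hence intrinsically linked. In the deletion case every cycle of $G'$ is a cycle of $G$, so $\Gamma^{(2)}(G')\subseteq\Gamma^{(2)}(G)$; for any $f\in{\rm SE}(G)$, the restriction $f|_{G'}$ realizes some $\lambda\in\Gamma^{(2)}(G')$ as nonsplittable, showing that $\Gamma^{(2)}(G')$ is linked in $G$, and the minimum degree hypothesis guarantees that $e$ lies in some disjoint cycle pair of $G$, making the inclusion proper. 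In the contraction case, lifting each cycle of $G/e$ through the contracted vertex back to a cycle of $G$ (inserting $e$ where necessary) defines an injection $\phi:\Gamma^{(2)}(G/e)\hookrightarrow\Gamma^{(2)}(G)$; the ambient isotopy of $S^{3}$ that shrinks $f(e)$ to a point turns $f\in{\rm SE}(G)$ into $\tilde f\in{\rm SE}(G/e)$ with $f(\phi(\lambda))$ and $\tilde f(\lambda)$ ambient isotopic, so $\phi(\Gamma^{(2)}(G/e))$ is linked in $G$, and a disjoint cycle pair of $G$ with one cycle passing through both endpoints of $e$ but avoiding $e$ itself lies outside the image of $\phi$, making the inclusion proper. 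Either case contradicts the minimality of $\Gamma^{(2)}(G)$.

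The main obstacle I anticipate is the ``if'' direction: one must verify for each of the seven Petersen family graphs and each of their disjoint cycle pairs that the required separating embedding actually exists, which seems to need either a case-by-case construction or a general structural result on spatial embeddings of Petersen family graphs. A more routine technical point in the ``only if'' direction is showing that the inclusions $\Gamma^{(2)}(G\setminus e)\subseteq\Gamma^{(2)}(G)$ and $\phi(\Gamma^{(2)}(G/e))\subseteq\Gamma^{(2)}(G)$ are proper, which requires exhibiting disjoint cycle pairs interacting with $e$ in prescribed ways and depends on the cycle-rich structure enforced by the minimum degree $\ge 3$ hypothesis (with some care needed when $e$ is a bridge).
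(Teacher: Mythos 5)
Both directions of your proposal contain genuine gaps, and in both cases the gap is exactly where the paper's real work lies.

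In the ``if'' direction, your reduction of all seven Petersen family graphs to $K_{6}$ via $\triangle Y$ and $Y\triangle$ exchanges is unsupported: the sets $\Gamma^{(2)}$ do not correspond bijectively under these exchanges, and an embedding realizing a prescribed pair as the \emph{unique} nonsplittable link does not transport along them by any stated mechanism. The paper's Lemma \ref{min_petersen} instead verifies each of the seven graphs directly: a concrete embedding $h$ containing exactly one Hopf link, together with automorphisms carrying an arbitrary pair to the distinguished one. Crucially, for $P_{8}$ and $P_{9}$ the automorphism group does \emph{not} act transitively on $\Gamma^{(2)}$ (for $P_{8}$ the $(5,3)$- and $(4,4)$-pairs form separate orbits; $P_{9}$ has the exceptional $(6,3)$-pair $\mu$), so a second embedding $g$, obtained from $h$ by a crossing change between two specific disjoint edges and chosen so that no other pair contains both edges in different components, is required. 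These cases show that the ``one embedding plus symmetry'' scheme you propose for $K_{6}$ does not propagate across the family, and you yourself flag this direction as unresolved; as written it is not a proof.

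In the ``only if'' direction your route is genuinely different from the paper's (a one-step minor reduction versus the paper's use of the full map $\Psi^{(2)}_{P,G}$, whose image is linked by Lemmas \ref{min_petersen} and \ref{minor_minlk}, hence equals $\Gamma^{(2)}(G)$ by minimality, followed by an analysis of when $\Psi^{(2)}_{P,G}$ can be surjective), but the crux of your version---properness of the inclusions $\Gamma^{(2)}(G\setminus e)\subseteq\Gamma^{(2)}(G)$ and $\phi(\Gamma^{(2)}(G/e))\subseteq\Gamma^{(2)}(G)$---is asserted rather than proved, and the justification you offer is false as stated. Minimum degree $\ge 3$ does \emph{not} guarantee that $e$ lies in a disjoint cycle pair: join $K_{6}$ and $K_{4}$ by a bridge $e$; all degrees are at least $3$, $e$ lies on no cycle, and a reduction to $K_{6}$ may legitimately begin by deleting $e$, so for that sequence your inclusion is an equality and no contradiction results. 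Excluding bridges requires the minimal linkedness hypothesis itself---this is precisely the paper's Lemma \ref{2conn}---not the degree hypothesis. Likewise, in the contraction case the existence of a pair with one cycle through both endpoints of $e$ but avoiding $e$ (or with one cycle through each endpoint) is unestablished in an arbitrary $G$ at the first reduction step. The paper avoids this trap by first pinning $G'$ down as a subdivision of $P$, via the checked count $\sharp\Gamma^{(2)}(P')>\sharp\Gamma^{(2)}(P)$ for any topologically nontrivial vertex splitting, and only then exhibiting the extra cycle pairs \emph{inside the known Petersen structure}, using the verified fact that any two vertices of $P$ are joined by a shortest path disjoint from some cycle. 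To repair your argument you would either have to prove both properness claims from minimal linkedness alone (in substance reproving Lemma \ref{2conn} and the paper's counting analysis) or restructure the whole direction as an induction handling the equality cases, which your one-step contradiction framework does not accommodate.
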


\begin{figure}[htbp]
\begin{center}
\scalebox{0.625}{\includegraphics*{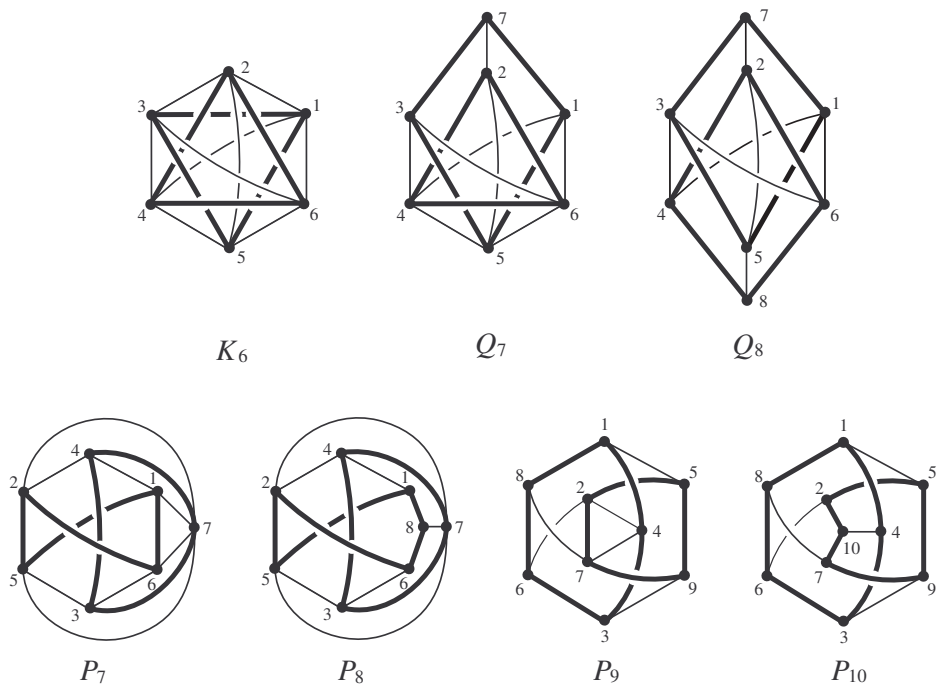}}
\caption{Petersen family $K_{6}$, $Q_{7}$, $Q_{8}$, $P_{7}$, $P_{8}$, $P_{9}$, $P_{10}$}
\label{capture_link2}
\end{center}
\end{figure}

Our second result in this paper is to explicitly give a minimally linked subset of $\Gamma_{p,q}(K_{p+q})$ for any $p,q\ge 3$. To accomplish this, we give a mimimally linked set of Hamiltonian pairs of cycles for some specific graphs as follows. Let $G$ be one of the graphs $G_{8},G_{9}$ and $G_{10}$ as illustrated in Fig. \ref{capture_link}. We denote an edge of $G$ connecting two vertices $i$ and $j$ by $\overline{ij}$, and a $p$-cycle $\overline{i_{1}i_{2}}\cup \overline{i_{2}i_{3}}\cup \cdots \cup \overline{i_{p}i_{1}}$ of $G$ by $[i_{1}\ i_{2}\ \cdots\ i_{p}]$. Then we define the subset $\Lambda(G)$ of $\Gamma^{(2)}(G)$ as follows. For $G = G_{8}$, we define $\Lambda(G_{8})$ by the proper subset of $\Gamma_{5,3}(G_{8})$ consisting of the following twelve Hamiltonian $(5,3)$-pairs of cycles 
\begin{eqnarray*}
&&[1\ 8\ 7\ 2\ 3]\cup [4\ 5\ 6],\ [1\ 2\ 8\ 7\ 3]\cup [4\ 5\ 6],\ [1\ 2\ 3\ 8\ 7]\cup [4\ 5\ 6], \\
&&[1\ 8\ 7\ 2\ 4]\cup [3\ 5\ 6],\ [1\ 8\ 7\ 2\ 5]\cup [4\ 3\ 6],\ [1\ 8\ 7\ 2\ 6]\cup [4\ 5\ 3],\\
&&[6\ 2\ 8\ 7\ 3]\cup [4\ 5\ 1],\ [5\ 2\ 8\ 7\ 3]\cup [4\ 1\ 6],\ [4\ 2\ 8\ 7\ 3]\cup [1\ 5\ 6],\\
&&[1\ 4\ 3\ 8\ 7]\cup [2\ 5\ 6],\ [1\ 5\ 3\ 8\ 7]\cup [4\ 2\ 6],\ [1\ 6\ 3\ 8\ 7]\cup [4\ 5\ 2]. 
\end{eqnarray*}
For $G = G_{9}$, we define $\Lambda(G_{9})$ by the proper subset of $\Gamma_{5,4}(G_{9})$ consisting of the following nine Hamiltonian $(5,4)$-pairs of cycles 
\begin{eqnarray*}
&&[1\ 8\ 7\ 2\ 6]\cup [4\ 9\ 5\ 3],\ [1\ 8\ 7\ 2\ 4]\cup [3\ 5\ 9\ 6],\ [1\ 8\ 7\ 2\ 5]\cup [4\ 3\ 6\ 9], \\
&&[6\ 2\ 8\ 7\ 3]\cup [4\ 9\ 5\ 1],\ [4\ 2\ 8\ 7\ 3]\cup [1\ 5\ 9\ 6],\ [5\ 2\ 8\ 7\ 3]\cup [4\ 1\ 6\ 9],\\
&&[1\ 6\ 3\ 8\ 7]\cup [4\ 9\ 5\ 2],\ [1\ 4\ 3\ 8\ 7]\cup [2\ 5\ 9\ 6],\ [1\ 5\ 3\ 8\ 7]\cup [4\ 2\ 6\ 9]. 
\end{eqnarray*}
For $G = G_{10}$, we define $\Lambda(G_{10})$ by the proper subset of $\Gamma_{5,5}(G_{10})$ consisting of the following eighteen Hamiltonian $(5,5)$-pairs of cycles 
\begin{eqnarray*}
&&[1\ 8\ 7\ 2\ 3]\cup [4\ 10\ 9\ 5\ 6],\ [1\ 8\ 7\ 2\ 3]\cup [4\ 5\ 10\ 9\ 6],\ [1\ 8\ 7\ 2\ 3]\cup [4\ 5\ 6\ 10\ 9], \\
&&[1\ 2\ 8\ 7\ 3]\cup [4\ 10\ 9\ 5\ 6],\ [1\ 2\ 8\ 7\ 3]\cup [4\ 5\ 10\ 9\ 6],\ [1\ 2\ 8\ 7\ 3]\cup [4\ 5\ 6\ 10\ 9], \\
&&[1\ 2\ 3\ 8\ 7]\cup [4\ 10\ 9\ 5\ 6],\ [1\ 2\ 3\ 8\ 7]\cup [4\ 5\ 10\ 9\ 6],\ [1\ 2\ 3\ 8\ 7]\cup [4\ 5\ 6\ 10\ 9], \\
&&[1\ 8\ 7\ 2\ 6]\cup [4\ 10\ 9\ 5\ 3],\ [1\ 8\ 7\ 2\ 4]\cup [3\ 5\ 10\ 9\ 6],\ [1\ 8\ 7\ 2\ 5]\cup [4\ 3\ 6\ 10\ 9],\\
&&[6\ 2\ 8\ 7\ 3]\cup [4\ 10\ 9\ 5\ 1],\ [4\ 2\ 8\ 7\ 3]\cup [1\ 5\ 10\ 9\ 6],\ [5\ 2\ 8\ 7\ 3]\cup [4\ 1\ 6\ 10\ 9], \\
&&[1\ 6\ 3\ 8\ 7]\cup [4\ 10\ 9\ 5\ 2],\ [1\ 4\ 3\ 8\ 7]\cup [2\ 5\ 10\ 9\ 6],\ [1\ 5\ 3\ 8\ 7]\cup [4\ 2\ 6\ 10\ 9]. 
\end{eqnarray*}

\begin{Theorem}\label{maing8910}
Let $G$ be one of the graphs $G_{8},G_{9}$ and $G_{10}$. Then $\Lambda(G)$ is minimally linked. 
\end{Theorem}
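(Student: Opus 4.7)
The proof splits naturally along Proposition \ref{uni_link}: I would first verify that $\Lambda(G)$ is linked, and then construct, for every $\lambda\in\Lambda(G)$, a spatial embedding $f_{\lambda}$ of $G$ that realizes $f_{\lambda}(\lambda)$ as the unique nonsplittable constituent link from $\Lambda(G)$.

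For the linkedness half, the natural approach is to establish a Conway--Gordon type congruence
\[
\sum_{\lambda\in\Lambda(G)}{\rm lk}\bigl(f(\lambda)\bigr)\equiv 1\pmod 2
\]
for every $f\in{\rm SE}(G)$. This is verified in the usual two steps. First, I would show that the sum is invariant modulo $2$ under a crossing change between two edges $e,e'$ of $G$: such a crossing change alters ${\rm lk}(f(\gamma\cup\gamma'))$ by $\pm 1$ exactly when $e\subset\gamma$ and $e'\subset\gamma'$ (or vice versa), so I need to check that the number of pairs $\gamma\cup\gamma'\in\Lambda(G)$ enjoying this property is even for every choice of $e,e'$, a finite combinatorial verification directly from the displayed lists. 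Second, I would compute the sum for one explicit almost-planar diagram of $G$ in which exactly one pair from $\Lambda(G)$ carries a single Hopf clasp and confirm that the value is odd. Since any two spatial embeddings are related by crossing changes and ambient isotopy, the congruence, and hence linkedness, follows.

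For the minimality half, I would exploit the visible block structure of $\Lambda(G)$: the lists partition into blocks of three obtained by cyclically re-labeling a specific cycle, suggesting the corresponding automorphisms of $G$ (for instance, rotations on $\{1,2,3\}$ in the lists for $G_{8}$ and $G_{10}$, and the apparent $3\times 3$ symmetry in $G_{9}$) act on $\Lambda(G)$ with very few orbits. For each orbit representative $\lambda$ I would construct a concrete diagram $f_{\lambda}$ by introducing a single Hopf clasp between two edges of $G$ that both lie in $\lambda$ but not jointly in any other element of $\Lambda(G)$, leaving the rest of the diagram planar; applying the relevant graph automorphisms then transports this diagram through the whole orbit and supplies the remaining $f_{\lambda'}$.

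The principal obstacle, I expect, is verifying for each $f_{\lambda}$ that every other pair $\lambda'\in\Lambda(G)\setminus\{\lambda\}$ is actually split, not merely of vanishing linking number. The strategy that keeps this tractable is to maintain the sole non-planarity of the diagram at the one intended Hopf clasp, so that splittability of every non-targeted pair is visible from the projection: either the two components lie in disjoint planar disks, or the lone crossing involves two edges from the same component. The combinatorial task of locating, for each orbit representative, a pair of edges whose clasp isolates $\lambda$ within $\Lambda(G)$ is sharpest for $G_{10}$ with its eighteen pairs, but the six-block symmetry substantially cuts down the casework.
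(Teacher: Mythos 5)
Your overall architecture coincides with the paper's: linkedness is proved exactly as in the paper's Lemma \ref{cg_gen} (the mod $2$ Conway--Gordon count, with invariance checked through the evenness condition on disjoint edge pairs), and minimality is reduced via Proposition \ref{uni_link} to exhibiting one embedding per orbit of the automorphism group generated by $(1\ 2\ 3)$ and $(4\ 5\ 6)$. The genuine gap is in your mechanism for producing those embeddings. You propose, for each orbit representative $\lambda$, a diagram that is planar except for a single Hopf clasp between two edges lying in $\lambda$ ``but not jointly in any other element of $\Lambda(G)$''. Such a pair of edges cannot exist, and such a diagram cannot exist, by the very parity property you verify in the linkedness half: in a diagram that is planar away from one clasp between disjoint edges $e$ and $e'$, the elements of $\Lambda(G)$ with nonzero linking number are precisely those containing $e$ and $e'$ in different components, and your own first step checks that the number of such elements is \emph{even} for every disjoint pair $e,e'$. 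Hence a lone clasp can never isolate exactly one element of $\Lambda(G)$; indeed it would force $\sum_{\lambda\in\Lambda(G)}{\rm lk}(f(\lambda))\equiv 0\pmod 2$, contradicting the congruence $\equiv 1$ you just established. (Clasping adjacent edges is no escape: adjacent edges never lie in different components of a pair of disjoint cycles, so such a clasp contributes no linking at all.)

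The paper circumvents this as follows. The base embedding $h$ of Fig. \ref{capture_link} is \emph{not} a one-clasp diagram: it has several crossings whose contributions cancel except on a single Hopf pair, and the fact that exactly one element of $\Lambda(G)$ is nonsplittable in $h$ is verified directly from the figure. Composing $h$ with the automorphisms handles one orbit (all of $\Lambda(G_{9})$, which is a single orbit). For the remaining orbit in $G_{8}$ and $G_{10}$, the paper performs a single crossing change on $h$ between two disjoint edges (e.g. $h(\overline{38})$ and $h(\overline{46})$ for $G_{8}$) that lie in opposite components of exactly \emph{two} elements of $\Lambda(G)$ --- consistent with the parity constraint --- namely the Hopf pair of $h$ and the new target: this trivializes the former, gives the latter odd linking number, and leaves every other $g(\lambda')$ ambient isotopic to the split link $h(\lambda')$, since no other element contains both edges. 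Your plan becomes correct if you replace ``one isolating clasp per representative'' by this transfer-of-linking device; as written, the construction of the embeddings $f_{\lambda}$ for at least one orbit in $G_{8}$ and $G_{10}$ would fail in principle, not merely in the splitness verification you flag as the principal obstacle.
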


\begin{figure}[htbp]
\begin{center}
\scalebox{0.525}{\includegraphics*{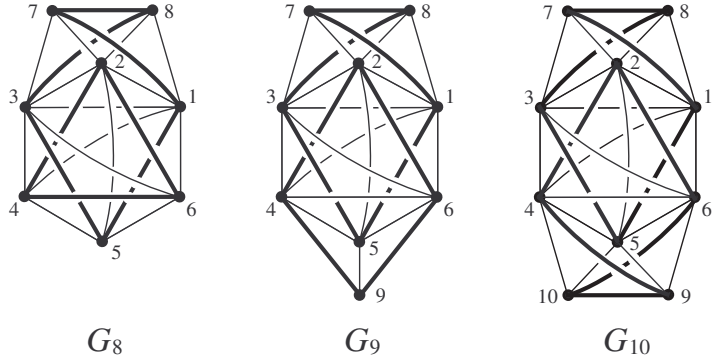}}
\caption{$G_{8}$, $G_{9}$, $G_{10}$}
\label{capture_link}
\end{center}
\end{figure}

By using Theorem \ref{maing8910}, we have the following for the complete graph $K_{n} $ on $n\ge 7$ vertices.

\begin{Theorem}\label{main}
\begin{enumerate}
\item For any integer $p\ge 5$, there exists a minimally linked subset of $\Gamma_{p,3}(K_{p+3})$ with exactly twelve elements. 
\item For any integer $p\ge 3$, there exists a minimally linked subset of $\Gamma_{p,4}(K_{p+4})$ with exactly nine elements. 
\item For any two integers $p,q\ge 5$, there exists a minimally linked subset of $\Gamma_{p,q}(K_{p+q})$ with exactly eighteen elements. 
\end{enumerate}
\end{Theorem}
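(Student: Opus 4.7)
My plan is to deduce Theorem \ref{main} from Theorem \ref{maing8910} by transferring $\Lambda(G_{8})$, $\Lambda(G_{9})$, and $\Lambda(G_{10})$ to $K_{p+q}$. The key structural observation, verifiable by inspection of the explicit lists of pairs, is that every $5$-cycle appearing in $\Lambda(G_{8})$ or $\Lambda(G_{9})$ passes through the single edge $\overline{78}$, while each of the two families of $5$-cycles in $\Lambda(G_{10})$ passes through one of $\overline{78}$ or $\overline{9\,10}$. This allows all $5$-cycles in a chosen set to be lengthened simultaneously by subdividing a single common edge.

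For part (1) with $p \geq 5$, I would subdivide $\overline{78}$ in $G_{8}$ by inserting $p - 5$ new vertices to obtain a graph $G_{8}^{*}$ on $p + 3$ vertices, and define $\tilde{\Lambda} \subset \Gamma_{p,3}(K_{p+3})$ as the set of twelve lifted Hamiltonian $(p, 3)$-pairs. Since subdivision preserves the ambient isotopy class of every constituent link, $\tilde{\Lambda}$ is minimally linked in $G_{8}^{*}$, and viewing $G_{8}^{*} \subset K_{p+3}$, linkedness transfers by restriction: any $f \in \mathrm{SE}(K_{p+3})$ restricts to $G_{8}^{*}$ and, after contracting the subdivision edges, yields a spatial embedding of $G_{8}$ to which Theorem \ref{maing8910} applies. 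For minimality via Proposition \ref{uni_link}, I would take the witness embedding $f_{\lambda} \in \mathrm{SE}(G_{8})$ given by minimality of $\Lambda(G_{8})$, insert the $p - 5$ subdivision vertices along the image of $\overline{78}$, and add the remaining edges of $K_{p+3}$ as pairwise disjoint trivial arcs within small balls centered at their endpoints. These trivial arcs cannot contribute to any nonsplit link, so the distinguishing property of $f_{\lambda}$ lifts to a witness for $\tilde{\lambda}$. Part (3) and part (2) for $p \geq 5$ follow the same pattern using $\Lambda(G_{10})$ and $\Lambda(G_{9})$, with part (3) requiring independent subdivisions of $\overline{78}$ and $\overline{9\,10}$.

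The main obstacle is part (2) for the small cases $p = 3$ and $p = 4$, since $G_{9}$ has nine vertices while $K_{p+4}$ has only seven or eight. Here I would replace subdivision by edge contraction of the $5$-cycles of $\Lambda(G_{9})$. Contracting the common edge $\overline{78}$ shortens every $5$-cycle in $\Lambda(G_{9})$ to a $4$-cycle simultaneously, producing a graph on eight vertices and handling $p = 4$: linkedness and minimality transfer because a spatial embedding of $K_{8}$ can be lifted via vertex splitting to an embedding of $G_{9}$, and conversely a witness embedding of $G_{9}$ descends by shrinking the image of $\overline{78}$ to a point. For $p = 3$ a second contraction is required, yet no single edge other than $\overline{78}$ lies in all nine $5$-cycles. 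The nine pairs naturally split into three groups of three, each calling for a different second contraction, and the main technical difficulty is to patch the three resulting partial constructions consistently inside a common $K_{7}$ so that the trivial-arc placements for the remaining edges are mutually compatible across all nine witness embeddings.
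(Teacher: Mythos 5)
Your treatment of parts (1), (3), and part (2) for $p\ge 5$ is essentially the paper's argument: subdivide $\overline{78}$ (and $\overline{9\,10}$) to obtain a spanning subdivision of $G_{8}$, $G_{9}$, or $G_{10}$ inside $K_{p+q}$, then transfer minimal linkedness via the paper's Lemma \ref{minor_minlk}, whose proof is exactly your restrict-and-contract step for linkedness and your extend-the-witness step for minimality. (One remark: since every lifted pair lies in the spanning subdivision, the images of the pairs under an extension of the witness embedding do not depend on the extension at all, so your careful placement of the remaining edges of $K_{p+3}$ as trivial arcs is unnecessary; any preimage under the map $\psi$ works, which is how Lemma \ref{minor_minlk} puts it.) The genuine problems are in part (2) for $p=3,4$. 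For $p=4$ there is an unaddressed subtlety: in $G_{9}$ the vertices $7$ and $8$ have common neighbors $1,2,3$, so contracting $\overline{78}$ produces parallel edges, and your descent of witness embeddings and your lifting of an arbitrary embedding of $K_{8}$ through a vertex splitting are only meaningful after the multigraph is simplified and the duplicated edges are accounted for. No lemma available to you does this: Lemma \ref{minor_minlk} transfers minimal linkedness \emph{upward} from a minor to a host graph, not downward through a contraction. The case is salvageable---the simplified contraction of $G_{9}$ along $\overline{78}$ is exactly $K_{4,4}$ minus an edge, i.e.\ the Petersen-family graph $Q_{8}$, and your nine contracted pairs are precisely $\Gamma_{4,4}(Q_{8})$---but the clean justification is the paper's: apply Lemma \ref{min_petersen} to $Q_{8}$ and then Lemma \ref{minor_minlk} to $Q_{8}$ as a spanning subgraph of $K_{8}$.

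For $p=3$ your proposal is incomplete by your own admission, and the patching idea does not work as described. The nine $4$-cycles through the contracted vertex $v$ use the edge pairs $\{\overline{v1},\overline{v2}\}$, $\{\overline{v2},\overline{v3}\}$, $\{\overline{v1},\overline{v3}\}$ in three groups of three, so three different second contractions are needed, and the resulting nine $(3,4)$-pairs in $K_{7}$ do not all factor through any single minor. Linkedness is quantified over \emph{all} of ${\rm SE}(K_{7})$: given an arbitrary embedding $f$ of $K_{7}$ there is no coherent simultaneous lift of $f$ to the three quotient constructions, and linkedness of a union of families does not follow from properties of the pieces; you would need an independent certificate, such as a mod $2$ count in the style of Lemma \ref{cg_gen}, which you do not supply. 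The paper sidesteps this entirely: $K_{7}$ contains the Petersen-family graph $P_{7}$ as a spanning subgraph, $\Gamma^{(2)}(P_{7})=\Gamma_{4,3}(P_{7})$ is minimally linked with exactly nine elements by Lemma \ref{min_petersen}, and since $P_{7}$ is spanning, Lemma \ref{minor_minlk} plants these nine pairs into $\Gamma_{3,4}(K_{7})$ as Hamiltonian pairs. You should replace your $p=3$ construction (and, for uniformity, your $p=4$ construction) by this Petersen-family argument.
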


The proof is constructive, namely a minimally linked set of Hamiltonian pairs of cycles is explicitly given in any case. Therefore, for any two integers $p,q\ge 3$, by lying in ambush on at most eighteen specific Hamiltonian $(p,q)$-pairs of cycles, we can capture a nonsplittable Hamiltonian link of type $(p,q)$ for any spatial graph of $K_{p+q}$. We prove Theorem \ref{lk_min_petersen} in Section \ref{proofs1}, and Theorem \ref{maing8910} and Theorem \ref{main} in Section \ref{proofs2}.

\section{Proof of Theorem \ref{lk_min_petersen}}\label{proofs1}

We have already known that the set of all pairs of disjoint cycles of a graph in the Petersen family is linked \cite{S84}. First, we show their minimal linkedness. 

\begin{Lemma}\label{min_petersen}
Let $G$ be a graph in the Petersen family. Then $\Gamma^{(2)}(G)$ is minimally linked. 
\end{Lemma}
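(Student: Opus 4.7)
My plan is to use Proposition \ref{uni_link} to reformulate the claim: for every $\lambda_{0}\in \Gamma^{(2)}(G)$ I must produce an embedding $f_{\lambda_{0}}\in \mathrm{SE}(G)$ whose only nonsplit constituent $2$-component link coming from $\Gamma^{(2)}(G)$ is $f_{\lambda_{0}}(\lambda_{0})$. Since linkedness of $\Gamma^{(2)}(G)$ for each Petersen-family graph is already known, the content is to \emph{localise} the linking to a prescribed pair. I would handle $K_{6}$ first and then propagate through the family.

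For the base case $G=K_{6}$ I would exploit symmetry. The automorphism group $S_{6}$ acts transitively on $\Gamma^{(2)}(K_{6})=\Gamma_{3,3}(K_{6})$ because this set is in bijection with the unordered partitions of $\{1,\dots,6\}$ into two blocks of size three. Hence it suffices to exhibit one embedding $f$ of $K_{6}$ and one pair, say $\lambda_{0}=[1\ 2\ 3]\cup[4\ 5\ 6]$, such that $f(\lambda_{0})$ is nonsplit while every other pair of disjoint triangles is split. I would build $f$ explicitly by realising $[1\ 2\ 3]$ and $[4\ 5\ 6]$ as a standard Hopf link and routing the remaining nine edges through a small ball meeting the two triangles only in their vertices; splittability of each of the other nine pairs is then read off from the diagram.

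For the remaining six graphs, I would propagate using the $\nabla Y$ and $Y\nabla$ transformations under which the Petersen family is closed and connected. A $\nabla Y$ move replacing a triangle $T$ of a graph $G$ by a tripod centred at a new degree-$3$ vertex $v$ induces a natural correspondence $\Gamma^{(2)}(G)\leftrightarrow \Gamma^{(2)}(G')$: pairs of cycles disjoint from $T$ carry over unchanged, while each cycle using two (respectively one) edges of $T$ corresponds to a cycle through $v$ that uses the two (respectively one) matching edges at $v$. Performing the move inside a small ball around $T$ transforms any spatial embedding $f$ of $G$ into an embedding $f'$ of $G'$ that sends the corresponding pair to a link of the same type as $f$ does for the original pair. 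Starting from the base case of $K_{6}$ and iterating along a chosen $\nabla Y$-tree generating the family then yields the conclusion for each of $Q_{7},Q_{8},P_{7},P_{8},P_{9},P_{10}$.

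I expect the main obstacle to lie not in the combinatorics but in the verification of splittability in the base case: among the nine auxiliary pairs of disjoint triangles in $f(K_{6})$ I must rule out nontrivial-but-linking-number-zero behaviour of Whitehead type. With a concrete planar diagram of $f$ this is a direct check, since the triangles are unknotted and each auxiliary pair bounds visibly disjoint Seifert discs, but it is the crux of the argument. A secondary technical point is that the $\nabla Y$ propagation must preserve splittability, not merely linking-number parity; this is handled by performing the local replacement inside a ball disjoint from every other cycle, so that any splitting sphere for a pair in $f(G)$ can be reused to split the corresponding pair in $f'(G')$.
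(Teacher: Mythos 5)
Your base case is exactly the paper's treatment of $K_6$ (the $S_6$-action is transitive on partitions into two triples, and the standard embedding has a single Hopf link among the ten pairs of triangles), so the substance of your proposal is the $\nabla Y$/$Y\nabla$ propagation, and that is where there is a genuine gap. First, the Petersen family is not reachable from $K_6$ by $\nabla Y$ moves alone (for instance $P_7 = K_{3,3,1}$ requires a $Y\nabla$ step), so your induction must also pass through $Y\nabla$ moves; but under a $Y\nabla$ move replacing the tripod at $v$ with neighbors $a,b,c$ by the triangle $T=abc$, the set $\Gamma^{(2)}$ strictly grows: pairs having $T$ itself as a component, and pairs in which a cycle uses two edges of $T$, have no counterpart in the smaller graph (a preimage would have to pass through $v$ twice). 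Your claimed correspondence is therefore not a bijection, and for these new pairs the induction produces no embedding $f_{\lambda}$ at all. This is not a removable technicality: it is precisely where the paper's proof has to work hardest. In $P_8$ the pairs of type $(4,4)$, and in $P_9$ the pair $[1\ 8\ 6\ 3\ 9\ 5]\cup[2\ 7\ 4]$, are not in the automorphism orbit of the Hopf-linked pair of the standard embedding $h$, and the paper handles them by constructing a second embedding $g$ from $h$ via an explicit crossing change between two edges that lie on a unique pair of cycles. Your scheme, as written, never sees these exceptional pairs.

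The $\nabla Y$ step is also unsound as stated. There is no ``small ball around $T$ disjoint from every other cycle'': the remaining edges at the vertices $a,b,c$ enter every neighborhood of $f(T)$, a regular neighborhood of the closed curve $f(T)$ is a solid torus rather than a ball, and $f(T)$ need not be split from the rest of $f(G)$ in the inductive embeddings. Concretely, however you embed the tripod near $f(T)$ --- say with $v$ close to $a$ and legs running parallel to $f(\overline{ab})$ and $f(\overline{ac})$ --- the cycles of the new graph through $v$ using the legs $\overline{vb},\overline{vc}$ have images differing from the corresponding old cycles (those through $\overline{bc}$) by a detour around the whole triangle, so their linking number with any disjoint cycle $\delta$ shifts by $\pm\,\mathrm{lk}\bigl(f(T),f(\delta)\bigr)$, and a pair that is split in $f_{\lambda}$ can become nonsplit in $f'$. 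Repairing this requires every inductive embedding $f_{\lambda}$ to keep the chosen triangle split from all disjoint cycles --- a condition you neither impose nor verify, and which interacts with the choice of triangle along your tree of moves. The paper sidesteps all of this by arguing each of the seven graphs directly via Proposition \ref{uni_link}: one explicit embedding $h$ per graph with a unique Hopf link among the images of all pairs (Fig. \ref{capture_link2}), transitivity of an explicitly generated automorphism subgroup on the relevant orbit, and, for $P_8$ and $P_9$ only, one further embedding per exceptional orbit obtained from $h$ by a single crossing change. If you want to salvage the propagation idea, you would need to (i) prove a transfer lemma for $\nabla Y$ under the additional split-triangle hypothesis, and (ii) supply fresh embeddings for the pairs created at each $Y\nabla$ step, which in effect reconstructs the paper's ad hoc arguments for $P_8$ and $P_9$.
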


In the following, an {\it automorphism} of $G$ means a self-homeomorphism of $G$, and we may identify an automorphism of $G$ with a permutation of degree $n$, where $n$ is the number of vertices of $G$. 

\begin{proof}[Proof of Lemma \ref{min_petersen}]
Let $h$ be a spatial embedding of $G$ as illustrated in Fig. \ref{capture_link2}. Then we can check that each of the spatial graphs contains a Hopf link in the thick line part as exactly one nonsplittable link in the images of all elements in $\Gamma^{(2)}(G)$. 

In the case of $K_{6}$, for any element $\lambda$ in $\Gamma^{(2)}(K_{6})=\Gamma_{3,3}(K_{6})$ there exists an automorphism $\sigma$ of $K_{6}$ such that $\sigma$ sends  $\lambda$ to $[1\ 3\ 5]\cup [2\ 4\ 6]$. Then $h\circ \sigma(\lambda)$ is a Hopf link and $h\circ \sigma(\lambda')$ is a split link for any element $\lambda'$ in $\Gamma^{(2)}(K_{6})\setminus \{\lambda\}$. Thus by Proposition \ref{uni_link}, $\Gamma^{(2)}(K_{6})$ is minimally linked. 

In the case of $G=Q_{7},Q_{8},P_{7}$, for any element $\lambda$ in $\Gamma^{(2)}(G)=\Gamma_{4,3}(G)$ if $G = G_{7}$ or $P_{7}$, and $\Gamma_{4,4}(G)$ if $G=Q_{8}$, there exists an automorphism $\sigma$ of $G$ generated by $(1\ 2\ 3)$ and $(4\ 5\ 6)$ such that $\sigma$ sends $\lambda$ to $[1\ 7\ 3\ 5]\cup [2\ 4\ 6]$ if $G=Q_{7}$, $[1\ 5\ 2\ 6]\cup [7\ 3\ 4]$ if $G=P_{7}$ and $[1\ 7\ 3\ 5]\cup [2\ 4\ 8\ 6]$ if $G=Q_{8}$. Then $h\circ \sigma(\lambda)$ is a Hopf link and $h\circ \sigma(\lambda')$ is a split link for any element $\lambda'$ in $\Gamma^{(2)}(G)\setminus \{\lambda\}$. Thus by Proposition \ref{uni_link}, $\Gamma^{(2)}(G)$ is minimally linked. 

In the case of $P_{8}$, note that $\Gamma^{(2)}(P_{8})=\Gamma_{5,3}(P_{8})\cup \Gamma_{4,4}(P_{8})$. For any element $\lambda$ in $\Gamma_{5,3}(P_{8})$, there exists an automorphism $\sigma$ of $P_{8}$ generated by $(2\ 3)$ and $(4\ 5)$ such that $\sigma$ sends $\lambda$ to $[1\ 5\ 2\ 6\ 8]\cup [7\ 3\ 4]$. Then $h\circ \sigma(\lambda)$ is a Hopf link and $h\circ \sigma(\lambda')$ is a split link for any element $\lambda'$ in $\Gamma^{(2)}(P_{8})\setminus \{\lambda\}$. On the other hand, for any element $\mu$ in $\Gamma_{4,4}(P_{8})$, there exists an automorphism $\tau$ of $P_{8}$ generated by $(2\ 3)$, $(4\ 5)$ and $(1\ 6)(3\ 4)(2\ 5)$ such that $\tau$ sends $\mu$ to $[8\ 1\ 5\ 7]\cup [4\ 2\ 6\ 3]$. Let $g$ be an element of ${\rm SE}(P_{8})$ obtained from $h$ by a single crossing change between $h(\overline{15})$ and $h(\overline{34})$ and ambient isotopies such that $g([1\ 5\ 2\ 6\ 8]\cup [7\ 3\ 4])$ is a trivial link. Then we have that $g([8\ 1\ 5\ 7]\cup [4\ 2\ 6\ 3])$ has nonzero linking number, namely this is nonsplittable. Since no other element in $\Gamma^{(2)}(P_{8})$ contains $\overline{15}$ and $\overline{34}$ in the different components, $g(\lambda')$ is a split link for any element $\lambda'$ in $\Gamma^{(2)}(P_{8})\setminus \{[8\ 1\ 5\ 7]\cup [4\ 2\ 6\ 3]\}$. Then $g\circ \tau(\mu)$ is a nonsplittable link and $g\circ \tau(\mu')$ is a split link for any element $\mu'$ in $\Gamma^{(2)}(P_{8})\setminus \{\mu\}$. Thus by Proposition \ref{uni_link}, $\Gamma^{(2)}(P_{8})$ is minimally linked.

In the case of $P_{9}$, note that $\Gamma^{(2)}(P_{9})=\Gamma_{5,4}(P_{9})\cup \{\mu\}$, where $\mu = [1\ 8\ 6\ 3\ 9\ 5]\cup [2\ 7\ 4]$. For any element $\lambda$ in $\Gamma_{5,4}(P_{9})$, there exists an automorphism $\sigma$ of $P_{9}$ generated by $(1\ 6\ 9)(8\ 3\ 5)(2\ 7\ 4)$ and $(4\ 7\ 2)(1\ 8\ 6\ 3\ 9\ 5)$ such that $\sigma$ sends $\lambda$ to $[1\ 8\ 6\ 3\ 4]\cup [5\ 2\ 7\ 9]$. Then $h\circ \sigma(\lambda)$ is a Hopf link and $h\circ \sigma(\lambda')$ is a split link for any element $\lambda'$ in $\Gamma^{(2)}(P_{9})\setminus \{\lambda\}$. On the other hand, let $g$ be an element of ${\rm SE}(P_{9})$ obtained from $h$ by a single crossing change between $h(\overline{27})$ and $h(\overline{86})$ and ambient isotopies such that $g([1\ 8\ 6\ 3\ 4]\cup [5\ 2\ 7\ 9])$ is a trivial link. Then we have that $g(\mu)$ has nonzero linking number, namely this is nonsplittable. Since no other element in $\Gamma^{(2)}(P_{9})$ contains $\overline{27}$ and $\overline{86}$ in the different components, $g(\mu')$ is a split link for any element $\mu'$ in $\Gamma^{(2)}(P_{8})\setminus \{\mu\}$. Thus by Proposition \ref{uni_link}, $\Gamma^{(2)}(P_{9})$ is minimally linked.

In the case of $P_{10}$, for any element $\lambda$ in $\Gamma^{(2)}(P_{10})=\Gamma_{5,5}(P_{10})$ there exists an automorphism $\sigma$ of $P_{10}$ generated by $(1\ 6\ 9)(8\ 3\ 5)(2\ 7\ 4)$ and $(4\ 7\ 2)(1\ 8\ 6\ 3\ 9\ 5)$ such that $\sigma$ sends  $\lambda$ to $[1\ 8\ 6\ 3\ 4]\cup [5\ 2\ 10\ 7\ 9]$. Then $h\circ \sigma(\lambda)$ is a Hopf link and $h\circ \sigma(\lambda')$ is a split link for any element $\lambda'$ in $\Gamma^{(2)}(P_{10})\setminus \{\lambda\}$. Thus by Proposition \ref{uni_link}, $\Gamma^{(2)}(P_{10})$ is minimally linked. 
\end{proof}

Let $H$ be a minor of a graph $G$, namely there exists a subgraph $G'$ of $G$ and edges $e_{1},e_{2},\ldots,e_{m}$ of $G'$ such that $H$ is obtained from $G'$ by edge contractions along $e_{1},e_{2},\ldots,e_{m}$. Then by composing the injective map from $\Gamma^{(2)}(H)$ to $\Gamma^{(2)}(G')$ induced from vertex splittings on $H$ and the inclusion from $\Gamma^{(2)}(G')$ to $\Gamma^{(2)}(G)$, we obtain the natural injective map  
\begin{eqnarray*}
\Psi_{H,G}^{(2)}:\Gamma^{(2)}(H)\longrightarrow \Gamma^{(2)}(G). 
\end{eqnarray*}
On the other hand, for an element $f$ of ${\rm SE}(G)$, the element $\psi(f)$ of ${\rm SE}(H)$ is obtained from $f(G')$ by contracting spatial edges $f(e_{i})\ (i=1,2,\ldots,m)$. This correspondence from $f$ to $\psi(f)$ defines a surjective map 
\begin{eqnarray*}
\psi:{\rm SE}(G)\longrightarrow {\rm SE}(H). 
\end{eqnarray*}
Note that we can easily see the following.

\begin{Proposition}\label{psipsi}
Let $f$ be an element of ${\rm SE}(G)$. Then for any element $\lambda$ in $\Gamma^{(2)}(H)$, $2$-component links $\psi(f)(\lambda)$ and $f\big(\Psi_{H,G}^{(2)}(\lambda)\big)$ are ambient isotopic.
\end{Proposition}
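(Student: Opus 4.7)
The plan is to prove the claim by induction on the number $m$ of contracted edges. The base case $m=0$ is immediate, because then $\psi(f)=f|_{G'}$ and $\Psi_{H,G}^{(2)}$ is the inclusion, so the two constituent links coincide as subsets of $S^{3}$. For the inductive step it suffices to treat the case $m=1$, in which $H$ is obtained from a subgraph $G''$ of $G$ by contracting a single edge $e=\overline{uv}$ to a new vertex $w\in V(H)$; the spatial embedding $\psi(f)$ is then the realization of $f|_{G''}$ after contracting the PL arc $f(e)$ to a point.

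Fix $\lambda=C_{1}\cup C_{2}\in \Gamma^{(2)}(H)$ and write $\Psi_{H,G}^{(2)}(\lambda)=C_{1}'\cup C_{2}'$. Because $C_{1}$ and $C_{2}$ are disjoint in $H$, the vertex $w$ lies in at most one of them; after relabeling I may assume $w\notin C_{2}$, so that the vertex splitting yields $C_{2}'=C_{2}$ as a subgraph of $G''\subseteq G$. I would then split into subcases according to whether $w\in C_{1}$. If $w\notin C_{1}$, then $C_{1}'=C_{1}$ as well, so $f(C_{1}')\cup f(C_{2}')$ and $\psi(f)(C_{1})\cup \psi(f)(C_{2})$ differ only through the local realization of $\psi(f)$ near $f(e)$, which is implemented by a PL self-homeomorphism of $S^{3}$ supported in a regular neighborhood of $f(e)$ that is disjoint from the link; this yields the ambient isotopy directly.

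If instead $w\in C_{1}$, then the vertex split $C_{1}'$ contains $e$, so the arc $f(e)$ is one of the edges of the circle $f(C_{1}')$. I would choose a PL $3$-ball regular neighborhood $N$ of $f(e)$ small enough that $N$ meets $f(G'')$ only in $f(e)$ together with short initial segments of the edges incident to $u$ or $v$; in particular $N$ is disjoint from $f(C_{2}')=\psi(f)(C_{2})$. Inside $N$ the circle $f(C_{1}')$ appears as a properly embedded PL arc, and because a properly embedded PL arc in a PL $3$-ball is unknotted, one can construct a PL ambient isotopy of $S^{3}$ supported in $N$ that deforms this arc to the arc realizing $\psi(f)(C_{1})$ inside $N$, this being precisely the spatial contraction of $f(e)$. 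Since the isotopy is the identity on $S^{3}\setminus N$, it carries $f(\Psi_{H,G}^{(2)}(\lambda))$ setwise onto $\psi(f)(\lambda)$, giving the desired ambient isotopy. The main obstacle I expect is making this local shrinking precise enough to match the chosen realization of $\psi(f)$, but this amounts to a standard PL argument once the regular neighborhood $N$ and the isotopy inside $N$ are chosen carefully; the rest of the argument is purely combinatorial bookkeeping through the cases above.
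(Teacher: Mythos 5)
The paper itself gives no proof of this proposition---it is stated with ``Note that we can easily see the following''---so the only question is whether your argument is sound; its overall strategy (reduce by induction to a single contraction, then realize the spatial contraction of $f(e)$ by an ambient isotopy supported in a regular neighborhood $N$ of $f(e)$) is indeed the standard way to make the paper's ``easily seen'' precise. However, two steps are wrong as written. First, your dichotomy is false: if $w\in C_{1}$, the lifted cycle $C_{1}'$ need \emph{not} contain $e$. The two edges of $C_{1}$ incident to $w$ lift to edges of $G''$ incident to $u$ or to $v$, and if both happen to be incident to the same endpoint, say $u$, then $C_{1}'$ passes through $u$ and avoids $e$ entirely. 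This subcase falls through both of your cases: it is not the case $e\subset C_{1}'$, and your $w\notin C_{1}$ argument assumed $N$ could be taken disjoint from the link, which fails here because $f(C_{1}')$ meets $N$ in the point $f(u)$ together with two short segments of adjacent edges. The omitted case is handled by the same local technique---the collapse of $f(e)$ slides the point $f(u)$ of the cycle along the arc, and this is realized by an isotopy supported in $N$ that is the identity on $\partial N$---but it must be stated and treated, since neither of your two arguments covers it.

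Second, the justification ``a properly embedded PL arc in a PL $3$-ball is unknotted'' is false as a general statement: ball--arc pairs can be knotted (tie a local knot in a boundary-to-boundary arc). What actually saves the step is not a general unknotting fact but your specific choice of $N$: since $N$ is a regular neighborhood of the tame arc $f(e)$, the pair $(N, f(e))$ is an unknotted ball--arc pair, and $f(C_{1}')\cap N$ is the core $f(e)$ extended by two short segments running straight to $\partial N$, hence still unknotted in $N$; the arc realizing $\psi(f)(C_{1})$ inside $N$ is unknotted for the same reason, and uniqueness of unknotted proper arcs rel boundary then yields the isotopy supported in $N$. With the false general claim replaced by this regular-neighborhood argument, and with the missing subcase added, your proof is correct and supplies exactly the detail the paper omits.
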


Then the next lemma says that a minimally linked set of pairs of cycles for a minor $H$ of a graph $G$ naturally induces a minimally linked set for $G$.

\begin{Lemma}\label{minor_minlk}
Let $\Lambda(H)$ be a subset of $\Gamma^{(2)}(H)$ and $\Lambda(G)$ the image of $\Lambda(H)$ by $\Psi_{H,G}^{(2)}$. If $\Lambda(H)$ is (minimally) linked then $\Lambda(G)$ is also (minimally) linked. 
\end{Lemma}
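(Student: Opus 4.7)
The plan is to exploit the compatibility between the injective map $\Psi_{H,G}^{(2)}$ on pairs of cycles and the surjective map $\psi$ on spatial embeddings, which by Proposition \ref{psipsi} satisfy $\psi(f)(\lambda)\cong f(\Psi_{H,G}^{(2)}(\lambda))$ for every $f\in\mathrm{SE}(G)$ and every $\lambda\in\Gamma^{(2)}(H)$. This lets one transport nonsplittability and splittability statements back and forth between $H$ and $G$, and the whole lemma should fall out from this together with Proposition \ref{uni_link}.

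First I would dispose of the "linked implies linked" half. Given an arbitrary $f\in\mathrm{SE}(G)$, form $\psi(f)\in\mathrm{SE}(H)$; since $\Lambda(H)$ is linked, some $\lambda\in\Lambda(H)$ makes $\psi(f)(\lambda)$ nonsplittable, and by Proposition \ref{psipsi} this link is ambient isotopic to $f(\Psi_{H,G}^{(2)}(\lambda))$, whose argument lies in $\Lambda(G)$ by definition. So $\Lambda(G)$ is linked.

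Next I would handle the minimality. Assume $\Lambda(H)$ is minimally linked, and apply the characterization in Proposition \ref{uni_link}: for each $\lambda\in\Lambda(H)$, pick $f_\lambda\in\mathrm{SE}(H)$ with $f_\lambda(\lambda)$ nonsplittable and $f_\lambda(\lambda')$ split for every $\lambda'\in\Lambda(H)\setminus\{\lambda\}$. Because $\psi$ is surjective, we may lift $f_\lambda$ to some $g_\lambda\in\mathrm{SE}(G)$ with $\psi(g_\lambda)=f_\lambda$. Setting $\mu=\Psi_{H,G}^{(2)}(\lambda)$ and using Proposition \ref{psipsi} once more, $g_\lambda(\mu)\cong f_\lambda(\lambda)$ is nonsplittable while $g_\lambda(\Psi_{H,G}^{(2)}(\lambda'))\cong f_\lambda(\lambda')$ is split for every $\lambda'\in\Lambda(H)\setminus\{\lambda\}$. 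Invoking the "if" direction of Proposition \ref{uni_link} then yields that $\Lambda(G)$ is minimally linked.

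The argument is largely mechanical once the compatibility furnished by Proposition \ref{psipsi} is in hand. The one point that needs mentioning, rather than being a real obstacle, is the role of the injectivity of $\Psi_{H,G}^{(2)}$: it ensures that every element of $\Lambda(G)\setminus\{\mu\}$ has the form $\Psi_{H,G}^{(2)}(\lambda')$ for a unique $\lambda'\in\Lambda(H)\setminus\{\lambda\}$, so the "for all $\lambda'\neq\lambda$" clause on the $H$-side transports without loss to the required "for all $\mu'\neq\mu$" clause on the $G$-side.
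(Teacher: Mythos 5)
Your proposal is correct and follows essentially the same route as the paper's own proof: both halves transport linkedness through Proposition \ref{psipsi}, lift the embedding $f_\lambda$ via the surjectivity of $\psi$, and use the injectivity of $\Psi_{H,G}^{(2)}$ together with the characterization in Proposition \ref{uni_link} to conclude minimality. The only cosmetic difference is that you invoke the ``if'' direction of Proposition \ref{uni_link} explicitly where the paper leaves it implicit.
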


\begin{proof}
Let $f$ be an element of ${\rm SE}(G)$. Then by the assumption that $\Lambda(H)$ is linked and Proposition \ref{psipsi}, there exists an element $\lambda_{0}$ of $\Lambda(H)$ such that $f(\Psi_{H,G}^{(2)}(\lambda_{0}))\cong \psi(f)(\lambda_{0})$ is a nonsplittable link. Thus $\Lambda(G)=\Psi_{H,G}^{(2)}(\Lambda(H))$ is linked. Suppose that $\Lambda(H)$ is minimally linked. Then for any element $\Psi_{H,G}^{(2)}(\lambda)$ in $\Lambda(G)$, there exists an element $\bar{f}_{\lambda}$ of ${\rm SE}(H)$ such that $\bar{f}_{\lambda}(\lambda)$ is a nonsplittable link and $\bar{f}_{\lambda}(\lambda')$ is a split link for any element $\lambda'$ in $\Lambda(H)\setminus \{\lambda\}$. Then for an element $f_{\lambda}$ of $\psi^{-1}(\bar{f}_{\lambda})$, by Proposition \ref{psipsi} we have that $f_{\lambda}(\Psi_{H,G}^{(2)}(\lambda))\cong \bar{f}_{\lambda}(\lambda)$ is a nonsplittable link and $f_{\lambda}(\Psi_{H,G}^{(2)}(\lambda'))\cong \bar{f}_{\lambda}(\lambda')$ is a split link for $\lambda'\neq \lambda$. Since $\Psi_{H,G}^{(2)}$ is injective, we have $\Psi_{H,G}^{(2)}(\lambda')\neq \Psi_{H,G}^{(2)}(\lambda)$. Thus $\Lambda(G) = \Psi_{H,G}^{(2)}(\Lambda(H))$ is minimally linked. 
\end{proof}

A {\it cut-edge} of a graph is an edge of the graph whose deletion increases the number of the connected components. Then the next lemma says that if $\Gamma^{(2)}(G)$ is minimally linked then $G$ is essentially `$2$-edge-connected'. 

\begin{Lemma}\label{2conn}
Let $G$ be a graph with no vertices of degree $0$ or $1$. If $\Gamma^{(2)}(G)$ is minimally linked then $G$ is connected and does not have a cut-edge. 
\end{Lemma}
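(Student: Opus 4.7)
The plan is to argue both conclusions by a single contradiction. Suppose $\Gamma^{(2)}(G)$ is minimally linked yet $G$ is disconnected or has a cut-edge; I will exhibit a proper subset of $\Gamma^{(2)}(G)$ that is already linked, contradicting minimality.

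First I decompose $G$. In the disconnected case, select any two connected components $G_{1},G_{2}$. In the cut-edge case, let $e=\overline{uv}$ be a cut-edge and let $G_{1},G_{2}$ be the two components of $G\setminus\{e\}$ with $u\in G_{1}$, $v\in G_{2}$. In both situations $V(G_{1})\cap V(G_{2})=\emptyset$, and every cycle of $G$ lies entirely in $G_{1}$ or entirely in $G_{2}$, since a cut-edge belongs to no cycle. The hypothesis that $G$ has minimum degree at least $2$ forces each $G_{i}$ to contain a cycle: in the disconnected case every vertex of $G_{i}$ has the same degree in $G_{i}$ as in $G$, and in the cut-edge case this holds for every vertex of $G_{i}$ except one endpoint of $e$, whose degree in $G_{i}$ is still at least $1$. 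Thus at most one vertex of $G_{i}$ has degree below $2$, so $G_{i}$ is not a tree and must carry a cycle. Consequently the set $\Gamma^{\times}$ of pairs of disjoint cycles with one cycle in $G_{1}$ and the other in $G_{2}$ is nonempty, and $\Gamma^{(2)}(G)$ decomposes as the disjoint union of $\Gamma^{(2)}(G_{1})$, $\Gamma^{(2)}(G_{2})$, and $\Gamma^{\times}$.

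The main construction is the following. Given an arbitrary $f\in{\rm SE}(G)$, fix disjoint $3$-balls $B_{1},B_{2}\subset S^{3}$; for each $i$, since $f(G_{i})$ is compact, an ambient isotopy of $S^{3}$ carries $f(G_{i})$ into $B_{i}$, producing an embedding $g_{i}:G_{i}\to B_{i}$ equivalent as a spatial graph of $G_{i}$ to $f|_{G_{i}}$. Define $f'\in{\rm SE}(G)$ by $f'|_{G_{i}}=g_{i}$ and, in the cut-edge case, extend by a simple arc in $S^{3}\setminus(B_{1}\cup B_{2})$ joining $g_{1}(u)\in\partial B_{1}$ and $g_{2}(v)\in\partial B_{2}$ (with the positions of $g_{i}(u),g_{i}(v)$ arranged on the boundaries in advance). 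By construction, $f'(\mu)\cong f(\mu)$ for every $\mu\in\Gamma^{(2)}(G_{1})\cup\Gamma^{(2)}(G_{2})$, while $f'(\mu)$ is split for every $\mu\in\Gamma^{\times}$ because its two components lie in the disjoint balls $B_{1},B_{2}$.

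Since $\Gamma^{(2)}(G)$ is linked, applying this to $f'$ yields some $\mu\in\Gamma^{(2)}(G)$ with $f'(\mu)$ nonsplittable; the split observation forces $\mu\in\Gamma^{(2)}(G_{1})\cup\Gamma^{(2)}(G_{2})$, whence $f(\mu)\cong f'(\mu)$ is nonsplittable as well. As $f$ was arbitrary, the proper subset $\Gamma^{(2)}(G_{1})\cup\Gamma^{(2)}(G_{2})\subsetneq\Gamma^{(2)}(G)$ is itself linked, contradicting the minimal linkedness of $\Gamma^{(2)}(G)$. The only delicate point is assembling $f'$ in the cut-edge case, where an arc inside each $B_{i}$ reaching the basepoint on $\partial B_{i}$ must be spliced with the exterior connecting arc to produce a single piecewise-linear embedding; this is a routine PL manipulation, so I expect no serious obstacle.
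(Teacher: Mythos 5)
Your argument is correct in substance and takes a route dual to the paper's rather than the same one. The paper invokes Proposition \ref{uni_link}: minimality supplies, for a cross pair $\lambda$ (one cycle on each side of the decomposition), a witness embedding $f_{\lambda}$ splitting every other pair, whence each $\Gamma^{(2)}(G_{i})$ fails to be linked; it then recombines witnesses for the pieces into a single embedding of $G$ containing no nonsplittable $2$-component link, contradicting linkedness. You instead apply the disjoint-balls construction to an \emph{arbitrary} embedding $f$ and show directly that the proper subset $\Gamma^{(2)}(G_{1})\cup\Gamma^{(2)}(G_{2})$ is linked, contradicting minimality. Both proofs rest on the same two facts --- cross pairs exist, and they can be split without disturbing within-side link types --- but yours bypasses Proposition \ref{uni_link} entirely and makes explicit two points the paper leaves implicit: that the degree-$\ge 2$ hypothesis forces a cycle in each piece (including the cut-edge case, where one endpoint drops to degree $\ge 1$, which you handle correctly), and that the nonemptiness of the cross pairs is exactly what makes the subset proper. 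The paper's route has the virtue of reusing the template applied throughout Section \ref{proofs1}; yours is more self-contained.

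One step as written does fail. In the disconnected case you ``select any two connected components $G_{1},G_{2}$,'' but if $G$ has $m\ge 3$ components then it is false that every cycle of $G$ lies in $G_{1}$ or $G_{2}$, so your three-part decomposition of $\Gamma^{(2)}(G)$ is wrong, and your $f'$ is left undefined on the remaining components; moreover a nonsplittable $f'(\mu)$ produced by linkedness could involve a cycle of a third component, and the final deduction would not go through. The repair is immediate and costs nothing: take $G_{1}$ to be one component and $G_{2}=G\setminus G_{1}$ the union of all the others --- your construction uses only compactness of $f(G_{2})$, not its connectivity, and $G_{2}$ still contains a cycle --- or else isotope each component into its own ball. (The paper's version of the disconnected case quietly handles $m\ge 2$ because its single witness $f_{\lambda}$ splits all pairs interior to \emph{every} component at once.) With that adjustment your proof is complete.
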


\begin{proof}
First, assume that there exists a disconnected graph $G$ such that $\Gamma^{(2)}(G)$ is minimally linked. Let $G_{1},G_{2},\ldots,G_{m}$ be the connected components of $G$ ($m\ge 2$). Then for a pair of cycles $\lambda = \gamma_{1}\cup \gamma_{2}$ where $\gamma_{i}$ is a cycle of $G_{i}\ (i=1,2)$, by Proposition \ref{uni_link}, there exists an element $f_{\lambda}$ of ${\rm SE}(G)$ such that $f_{\lambda}(\lambda)$ is a nonsplittable link and $f_{\lambda}(\lambda')$ is a split link for any element $\lambda'$ in $\Gamma^{(2)}(G)\setminus \{\lambda\}$. This implies that $\Gamma^{(2)}(G_{i})$ is not linked for any $i=1,2,\ldots,m$. Then we can see that there exists an element $g$ of ${\rm SE}(G)$ such that $g(G)$ does not contain a nonsplittable $2$-component link. This is a contradiction. Hence, if $\Gamma^{(2)}(G)$ is minimally linked then $G$ is connected. 

Next, assume that $G$ has a cut-edge $e$. Let $G_{1}$ and $G_{2}$ be the connected components of $G\setminus {\rm int}e$. Then for a pair of cycles $\lambda = \gamma_{1}\cup \gamma_{2}$ where $\gamma_{i}$ is a cycle of $G_{i}\ (i=1,2)$, by Proposition \ref{uni_link}, there exists an element $f_{\lambda}$ of ${\rm SE}(G)$ such that $f_{\lambda}(\lambda)$ is a nonsplittable link and $f_{\lambda}(\lambda')$ is a split link for any element $\lambda'$ in $\Gamma^{(2)}(G)\setminus \{\lambda\}$. This implies that $\Gamma^{(2)}(G_{i})$ is not linked for any $i=1,2$. Then we also can see that there exists an element $g$ of ${\rm SE}(G)$ such that $g(G)$ does not contain a nonsplittable $2$-component link. This is a contradiction. 
\end{proof}

Now let us prove Theorem \ref{lk_min_petersen}. In the following, a {\it subdivision} $H'$ of a graph $H$ is a graph obtained from $H$ by subdividing some edges by a finite number of vertices, or equivalently, $H'$ is obtained from $H$ by finite number of vertex splittings which are topologically trivial. Thus a graph $H$ is a minor of its subdivision $H'$. 

\begin{proof}[Proof of Theorem \ref{lk_min_petersen}]
First we show the `if' part. Let $G$ be a graph homeomorphic to a graph $P$ in the Petersen family. Then $G$ is a subdivision of $P$ and the map $\Psi_{P,G}^{(2)}:\Gamma^{(2)}(P)\to \Gamma^{(2)}(G)$ is bijective. Thus by Lemma \ref{min_petersen} and Lemma \ref{minor_minlk}, we have that $\Gamma^{(2)}(G) = \Psi_{P,G}^{(2)}(\Gamma^{(2)}(P))$ is minimally linked. 

Next we show the `only if' part. Suppose that $\Gamma^{(2)}(G)$ is minimally linked. Since $G$ is intrinsically linked, $G$ has a minor $P$ in the Petersen family, namely there exists a subgraph $G'$ of $G$ such that $P$ is obtained from $G'$ by some edge contractions. Here, $G'$ can be taken as having no vertices of degree $0$ or $1$. Let $\Psi_{P,G}^{(2)}:\Gamma^{(2)}(P)\to \Gamma^{(2)}(G)$ be the natural injective map. Then by Lemma \ref{min_petersen} and Lemma \ref{minor_minlk}, we have that $\Psi_{P,G}^{(2)}(\Gamma^{(2)}(P))$ is linked. Since $\Gamma^{(2)}(G)$ is minimally linked, $\Psi_{P,G}^{(2)}(\Gamma^{(2)}(P))$ must coincide with $\Gamma^{(2)}(G)$. This implies that the map $\Psi_{P,G}^{(2)}$ is bijective. Let $P'$ be a graph obtained from $P$ by a single vertex splitting at a vertex $v$. If the degree of $v$ is greater than or equal to $4$ and this vertex splitting is not topologically trivial, then it can be seen that $\sharp \Gamma^{(2)}(P') > \sharp \Gamma^{(2)}(P)$ by checking each of the graphs in the Petersen family one by one. This implies that $G'$ is a subdivision of $P$. Suppose that $G\neq G'$. Then we divide our situation into the following two cases. First we consider the case that $G'$ is a spanning subgraph of $G$. Let $e$ be an edge of $G$ not contained in $G'$. Note that for any pair of vertices $u,v$ of $P$ (possibly $u=v$), there exist a shortest path $\rho$ between $u$ and $v$ and a cycle $\gamma$ of $P$ such that $\rho$ and $\gamma$ are disjoint. Therefore, if both of the terminal vertices of $e$ are the original vertices $u,v$ of $P$ in the subdivision $G'$, then there exist a path $\rho'$ between $u$ and $v$ and a cycle $\gamma'$ of $G'$ such that $e\cup \rho'$ and $\gamma'$ are disjoint pair of cycles in $G$ not contained in $G'$. Thus we have $\sharp \Gamma^{(2)}(P) = \sharp \Gamma^{(2)}(G') < \sharp \Gamma^{(2)}(G)$, and this is a contradiction. If either of the terminal vertices of $e$ is not a vertex of $P$ in $G'$, then we can take another subdivision $G''$ of $P$ so that both of the terminal vertices of $e$ are the vertices of $P$ in $G''$. Thus, similar to the previous argument, we find that there is a contradiction. Next we consider the case that $G'$ is not a spanning subgraph of $G$. Let $F'$ be a connected component of a subgraph of $G$ induced by the vertices of $G$ which are not contained in $G'$. Note that by Lemma \ref{2conn}, $G$ does not have a cut-edge. Thus there exist at least two edges connecting $G'$ and $F'$. Hence, there exists a path of $G$ connecting the vertices of $G'$ which is edge-disjoint with $G'$. Then by the same argument as in the previous case, we have $\sharp \Gamma^{(2)}(P) = \sharp \Gamma^{(2)}(G') < \sharp \Gamma^{(2)}(G)$, and this is a contradiction. From the above we have $G = G'$, namely $G$ is homeomorphic to $P$. 
\end{proof}

\section{Proof of Theorem \ref{maing8910} and Theorem \ref{main}}\label{proofs2}

For a graph $G=G_{8},G_{9},G_{10}$, we first show the linkedness of $\Lambda(G)$. 

\begin{Lemma}\label{cg_gen}
Let $G$ be one of the graphs $G_{8},G_{9}$ and $G_{10}$. Then $\Lambda(G)$ is linked. 
\end{Lemma}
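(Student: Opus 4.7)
The plan is to prove, for each $G \in \{G_{8}, G_{9}, G_{10}\}$, the Conway--Gordon-type congruence
\[
\sum_{\lambda \in \Lambda(G)} \mathrm{lk}_{2}(f(\lambda)) \equiv 1 \pmod{2}
\]
for every $f \in \mathrm{SE}(G)$, where $\mathrm{lk}_{2}$ denotes the mod-$2$ linking number. Since any split $2$-component link has zero linking number, such a congruence produces at least one $\lambda \in \Lambda(G)$ with $f(\lambda)$ nonsplittable, which is exactly the linkedness of $\Lambda(G)$.

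Each of $G_{8}, G_{9}, G_{10}$ contains $K_{6}$ on $\{1,\ldots,6\}$ as a subgraph, supplemented by ``extension'' vertices: $\{7,8\}$ attached to $\{1,2,3\}$ for $G_{8}$, together with $9$ attached to $\{4,5,6\}$ for $G_{9}$, and together with a second pair $\{9,10\}$ attached to $\{4,5,6\}$ analogously for $G_{10}$. Every cycle in a pair $\lambda \in \Lambda(G)$ passes through these extensions, and in the mod-$2$ cycle space admits a decomposition of the form
\[
[a\ 8\ 7\ b\ c] \equiv [a\ b\ c] + [a\ 8\ 7\ b], \qquad [a\ 9\ b\ c] \equiv [a\ 9\ b] + [a\ b\ c] \pmod{2},
\]
into a ``main'' cycle living inside $K_{6}$ plus a short ``auxiliary'' cycle built from the extensions (with analogous decompositions for cycles through $\{9,10\}$ in $G_{10}$). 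Substituting into $\mathrm{lk}_{2}(f(\lambda))$ and using bilinearity expands $\sum_{\lambda} \mathrm{lk}_{2}(f(\lambda))$ into a main--main part plus auxiliary cross-terms: the main--main part essentially realises the Conway--Gordon sum on the ambient $K_{6}$, while the auxiliary cross-terms account for the discrepancy.

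The key is then that, for each fixed auxiliary cycle $C$, the cycles paired with $C$ in the corresponding sub-sum add up in the mod-$2$ cycle space to a small, explicitly identifiable $1$-cycle, typically the triangle $[1\ 2\ 3]$ or $[4\ 5\ 6]$ in $K_{6}$, or the zero cycle; bilinearity of $\mathrm{lk}_{2}$ then collapses each sub-sum to a single manageable linking number. The underlying mechanism is the elementary identity that the four $3$-cycles of any $K_{4}$ sum to $0$ mod $2$. Matching these collapsed auxiliary contributions against the missing or repeated $\Gamma_{3,3}(K_{6})$-terms in the main--main sum, and invoking the Conway--Gordon theorem $\sum_{\mu \in \Gamma_{3,3}(K_{6})} \mathrm{lk}_{2}(f(\mu)) \equiv 1 \pmod{2}$, yields the desired congruence. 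The main obstacle is the bookkeeping of these decompositions and cancellations, particularly for $G_{10}$ where both components of each $\lambda$ involve extensions and four classes of cross-terms must be expanded and regrouped simultaneously; I would exploit the manifest $\mathrm{Sym}(\{1,2,3\}) \times \mathrm{Sym}(\{4,5,6\})$ permutation symmetry visible in the pair lists of $\Lambda(G)$ to reduce the verification to a small number of orbit representatives.
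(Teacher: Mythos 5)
Your route is genuinely different from the paper's. The paper establishes the same congruence $\sum_{\lambda\in \Lambda(G)}{\rm lk}(f(\lambda))\equiv 1 \pmod 2$ by the original Conway--Gordon mechanism: the mod $2$ sum is an invariant of $f$ because, for every pair of disjoint edges $e,e'$ of $G$, an even number of elements of $\Lambda(G)$ contains $e$ and $e'$ in different components (so no crossing change can alter the sum), and the invariant is then evaluated to be $1$ on the standard embedding $h$ of Fig. \ref{capture_link}, which exhibits exactly one Hopf link. Your homological reduction is sound for $G_{8}$ and $G_{10}$: there every edge of $K_{6}$ on $\{1,\ldots,6\}$ genuinely lies in the graph, each decomposition piece stays disjoint from the opposite component so bilinearity of the mod $2$ linking number applies, the cross-terms cancel exactly as you predict (for instance $[1\ 8\ 7\ 2]$ gets paired against the four triangles of the $K_{4}$ on $\{3,4,5,6\}$, which sum to zero in the cycle space), and the main--main part realizes each of the ten elements of $\Gamma_{3,3}(K_{6})$ an odd number of times ($[1\ 2\ 3]\cup [4\ 5\ 6]$ three times, the other nine once each), so Conway--Gordon for $K_{6}$ finishes. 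What your approach buys is a derivation from the classical $K_{6}$ theorem rather than a from-scratch invariance argument; what the paper's buys is uniformity, since one short parity check covers all three graphs with no case-by-case bookkeeping.

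For $G_{9}$, however, your structural premise fails and this is a genuine gap: $G_{9}$ does not contain $K_{6}$ on $\{1,\ldots,6\}$ as a subgraph. In Fig. \ref{capture_link} the vertex $9$ replaces the triangle on $\{4,5,6\}$ in $\Delta$Y fashion rather than being attached on top of it; accordingly no cycle in $\Lambda(G_{9})$ uses an edge inside $\{4,5,6\}$, and contracting $\overline{78}$ carries the union of the cycles of $\Lambda(G_{9})$ onto $K_{4,4}$ minus an edge, i.e.\ onto the Petersen-family graph $Q_{8}$, not onto $K_{6}$ plus extensions. Consequently your identity $[4\ 9\ 5\ 3]\equiv [4\ 9\ 5]+[4\ 5\ 3]$ invokes the chord $\overline{45}\notin E(G_{9})$, so the right-hand side is not a sum of cycles of the graph and the Conway--Gordon sum over $\Gamma_{3,3}(K_{6})$ is not available. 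Even if you realize the missing chords by auxiliary arcs in $S^{3}$, the bookkeeping no longer closes as stated: the main--main part produces only nine of the ten pairs of $\Gamma_{3,3}(K_{6})$ (the pair $[1\ 2\ 3]\cup [4\ 5\ 6]$ never arises from $\Lambda(G_{9})$), and dually each auxiliary cycle meets only three of the four $K_{4}$-triangles, so the cross-terms collapse not to $0$ but to linking numbers with the missing fourth triangle; making these corrections recombine into the desired congruence is precisely the nontrivial $\Delta$Y-transfer argument, which your sketch does not supply. The economical repair is to abandon the $K_{6}$ reduction for $G_{9}$ altogether: contract $\overline{78}$, note via Proposition \ref{psipsi} that the images of $\Lambda(G_{9})$ become the images of all nine elements of $\Gamma^{(2)}(Q_{8})=\Gamma_{4,4}(Q_{8})$, and quote Sachs' theorem (as in Lemma \ref{min_petersen}) that this set is linked.
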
 

\begin{proof}
The proof is given in exactly the same way as Conway--Gordon theorem for $K_{6}$ \cite{CG83}. Let $f$ be a spatial embedding of $G$. In the following we show 
\begin{eqnarray*}
\sum_{\lambda\in \Lambda(G)}{\rm lk}(f(\lambda))\equiv 1\pmod{2}, 
\end{eqnarray*}
where ${\rm lk}$ denotes the linking number in the $3$-sphere. This implies that there exists an element $\lambda$ in $\Lambda(G)$ such that $f(\lambda)$ has odd linking number, namely $f(\lambda)$ is nonsplittable. 
%
%
We define $\varsigma(f)\in {\mathbb Z}_{2}$ by the modulo two reduction of $\sum_{\lambda\in \Lambda(G)}{\rm lk}(f(\lambda))$. Note that the mod $2$ linking number of a $2$-component link does not change under crossing changes on the same component, and changes under a single crossing change between different components. So $\varsigma(f)$ does not change under crossing changes on the same edge, and between adjacent edges. On the other hand, it can be checked that for any disjoint edges $e$ and $e'$ of $G$, there exist even number of pairs of cycles in $\Lambda(G)$ containing both $e$ and $e'$ in each of the components separately. So $\varsigma(f)$ does not change under crossing changes between disjoint edges. Therefore $\varsigma(f)$ does not change under any crossing change on $f(G)$. Since any two spatial embeddings of a graph are transformed into each other by crossing changes and ambient isotopies, this implies that the value $\varsigma(f)$ does not depend on the choice of $f$. Let $h$ be a spatial embedding of $G$ as illustrated in Fig. \ref{capture_link}. Then we can check that each of the spatial graphs contains a Hopf link in the thick line part as exactly one nonsplittable link in the images of all elements in $\Lambda(G)$. Thus we have $\varsigma(f) = \varsigma(h)=1$.
\end{proof}

\begin{proof}[Proof of Theorem \ref{maing8910}]
In the case of $G_{8}$, for any element $\lambda$ in $\Lambda(G_{8})$ which does not contain $[4\ 5\ 6]$ as one of the components, there exists an automorphism $\sigma$ of $G_{8}$ generated by $(1\ 2\ 3)$ and $(4\ 5\ 6)$ such that $\sigma$ sends $\lambda$ to $[1\ 5\ 3\ 8\ 7]\cup [4\ 2\ 6]$. Let $h$ be an element of ${\rm SE}(G_{8})$ as illustrated in Fig. \ref{capture_link}. Then $h\circ \sigma(\lambda)$ is a Hopf link and $h\circ \sigma(\lambda')$ is a split link for any element $\lambda'$ in $\Lambda(G_{8})\setminus \{\lambda\}$. On the other hand, for any element $\mu$ in $\Lambda(G_{8})$ containing $[4\ 5\ 6]$ as one of the components, there exists an automorphism $\tau$ of $G_{8}$ generated by $(1\ 2\ 3)$ such that $\tau$ sends $\mu$ to $[1\ 2\ 3\ 8\ 7]\cup [4\ 5\ 6]$. Let $g$ be an element of ${\rm SE}(G_{8})$ obtained from $h$ by a single crossing change between $h(\overline{38})$ and $h(\overline{46})$ and ambient isotopies such that $g([1\ 5\ 3\ 8\ 7]\cup [4\ 2\ 6])$ is a trivial link. Then we have that $g([1\ 2\ 3\ 8\ 7]\cup [4\ 5\ 6])$ has nonzero linking number, namely this is nonsplittable. Since no other element in $\Lambda(G_{8})$ contains both $\overline{38}$ and $\overline{46}$, $g(\lambda')$ is a split link for any element $\lambda'$ in $\Lambda(G_{8})\setminus \{[1\ 2\ 3\ 8\ 7]\cup [4\ 5\ 6]\}$. Then $g\circ \tau(\mu)$ is a nonsplittable link and $g\circ \tau(\mu')$ is a split link for any element $\mu'$ in $\Lambda(G_{8})\setminus \{\mu\}$. Thus by Proposition \ref{uni_link}, $\Lambda(G_{8})$ is minimally linked. 

In the case of $G_{9}$, for any element $\lambda$ in $\Lambda(G_{9})$ there exists an automorphism $\sigma$ of $G_{9}$ generated by $(1\ 2\ 3)$ and $(4\ 5\ 6)$ such that $\sigma$ sends $\lambda$ to $[1\ 5\ 3\ 8\ 7]\cup [4\ 2\ 6\ 9]$. Let $h$ be an element of ${\rm SE}(G_{9})$ as illustrated in Fig. \ref{capture_link}. Then $h\circ \sigma(\lambda)$ is a Hopf link and $h\circ \sigma(\lambda')$ is a split link for any element $\lambda'$ in $\Lambda(G_{9})\setminus \{\lambda\}$. Thus by Proposition \ref{uni_link}, $\Lambda(G_{9})$ is minimally linked. 

In the case of $G_{10}$, for any element $\lambda$ in $\Lambda(G_{10})$ where not all of the vertices $1, 2$ and $3$ (equivalently, $4, 5$ and $6$) are included in the same component, there exists an automorphism $\sigma$ of $G_{8}$ generated by $(1\ 2\ 3)$ and $(4\ 5\ 6)$ such that $\sigma$ sends $\lambda$ to $[1\ 5\ 3\ 8\ 7]\cup [4\ 2\ 6\ 10\ 9]$. Let $h$ be an element of ${\rm SE}(G_{10})$ as illustrated in Fig. \ref{capture_link}. Then $h\circ \sigma(\lambda)$ is a Hopf link and $h\circ \sigma(\lambda')$ is a split link for any element $\lambda'$ in $\Lambda(G_{10})\setminus \{\lambda\}$. On the other hand, for any element $\mu$ in $\Lambda(G_{10})$ where all of the vertices $1, 2$ and $3$ (equivalently, $4, 5$ and $6$) are included in the same component, there exists an automorphism $\tau$ of $G_{10}$ generated by $(1\ 2\ 3)$ and $(4\ 5\ 6)$ such that $\tau$ sends $\mu$ to $[1\ 2\ 3\ 8\ 7]\cup [4\ 5\ 6\ 10\ 9]$. Let $g$ be an element of ${\rm SE}(G_{10})$ obtained from $h$ by a single crossing change between $h(\overline{6\ 10})$ and $h(\overline{38})$ and ambient isotopies such that $g([1\ 5\ 3\ 8\ 7]\cup [4\ 2\ 6\ 10\ 9])$ is a trivial link. Then we have that $g([1\ 2\ 3\ 8\ 7]\cup [4\ 5\ 6\ 10\ 9])$ has nonzero linking number, namely this is nonsplittable. Since no other element in $\Lambda(G_{10})$ contains both $\overline{6\ 10}$ and $\overline{38}$, $g(\lambda')$ is a split link for any element $\lambda'$ in $\Lambda(G_{10})\setminus \{[1\ 2\ 3\ 8\ 7]\cup [4\ 5\ 6\ 10\ 9]\}$. Then $g\circ \tau(\mu)$ is a nonsplittable link and $g\circ \tau(\mu')$ is a split link for any element $\mu'$ in $\Lambda(G_{10})\setminus \{\mu\}$. Thus by Proposition \ref{uni_link}, $\Lambda(G_{10})$ is minimally linked. 
\end{proof}

\begin{proof}[Proof of Theorem \ref{main}]
(1) Let $p\ge 5$ be an integer. Then $K_{p+3}$ contains a spanning subgraph $G'_{8}$ which is a subdivision of $G_{8}$ obtained by subdividing the edge $\overline{78}$ by $p-5$ vertices. Since $G_{8}$ is a minor of $K_{p+3}$, by Theorem \ref{maing8910} and Lemma \ref{minor_minlk}, $\Lambda(K_{p+3}) = \Psi_{G_{8},K_{p+3}}^{(2)}(\Lambda(G_{8}))$ is minimally linked. Since any element in $\Lambda(G_{8})$ is a Hamiltonian $(5,3)$-pair of cycles where the edge $\overline{78}$ is included in the $5$-cycle component, $\Lambda(K_{p+3})$ is a subset of $\Gamma_{p,3}(K_{p+3})$ consisting of exactly twelve elements.

(2) Let $p\ge 3$ be an integer. In the case of $p=3$, $K_{7}$ contains a spanning subgraph $P'_{7}$ that is equal to $P_{7}$. Since $P_{7}$ is a minor of $K_{7}$, by Lemma \ref{min_petersen} and Lemma \ref{minor_minlk}, $\Lambda(K_{7}) = \Psi_{P_{7},K_{7}}^{(2)}(\Lambda(P_{7}))$ is minimally linked. In the case of $p=4$, $K_{8}$ contains a spanning subgraph $Q'_{8}$ that is equal to $Q_{8}$. Since $Q_{8}$ is a minor of $K_{8}$, by Lemma \ref{min_petersen} and Lemma \ref{minor_minlk}, $\Lambda(K_{8}) = \Psi_{Q_{8},K_{8}}^{(2)}(\Lambda(Q_{8}))$ is minimally linked. Note that $\sharp \Lambda(K_{7}) = \sharp \Gamma_{3,4}(P_{7}) = 9$ and $\sharp \Lambda(K_{8}) = \sharp \Gamma_{4,4}(Q_{8}) = 9$. In the case of  $p\ge 5$, $K_{p+4}$ contains a spanning subgraph $G'_{9}$ which is a subdivision of $G_{9}$ obtained by subdividing the edge $\overline{78}$ by $p-5$ vertices. Then in the same way as (1), we can obtain a minimally linked subset $\Lambda(K_{p+4})= \Psi_{G_{9},K_{p+4}}^{(2)}(\Lambda(G_{9}))$ of $\Gamma_{p,4}(K_{p+4})$ consisting of exactly nine elements. 

(3) Let $p,q\ge 5$ be two integers. Then $K_{p+q}$ contains a spanning subgraph $G'_{10}$ which is a subdivision of $G_{10}$ obtained by subdividing the edge $\overline{78}$ by $p-5$ vertices and $\overline{9\ 10}$ by $q-5$ vertices. Since $G_{10}$ is a minor of $K_{p+q}$, by Theorem \ref{maing8910} and Lemma \ref{minor_minlk}, $\Lambda(K_{p+q}) = \Psi_{G_{10},K_{p+q}}^{(2)}(\Lambda(G_{10}))$ is minimally linked. Since any element in $\Lambda(G_{10})$ is a Hamiltonian $(5,5)$-pair of cycles containing both $\overline{78}$ and $\overline{9\ 10}$ in each of the components separately, $\Lambda(K_{p+q})$ is a subset of $\Gamma_{p,q}(K_{p+q})$ consisting of exactly eighteen elements. 
\end{proof}

\begin{Remark}\label{peter}
Note that $K_{10}$ contains a spanning subgraph $P'_{10}$ that is equal to $P_{10}$. Since $P_{10}$ is a minor of $K_{10}$, by Lemma \ref{min_petersen} and Lemma \ref{minor_minlk}, $\Lambda'(K_{10}) = \Psi_{P_{10},K_{10}}^{(2)}(\Lambda(P_{10}))$ is minimally linked. Since $\sharp \Lambda'(K_{10}) = 6$, the minimally linked subset $\Lambda(K_{10})$ of $\Gamma_{5,5}(K_{10})$ found in Theorem \ref{main} (3) does not realize the minimum number of elements in a minimally linked set of Hamiltonian $(5,5)$-pairs of cycles. It is still open to determine the minimum and maximum numbers of elements in a minimally linked set of Hamiltonian $(p,q)$-pairs of cycles for $K_{n}$. 
\end{Remark}

\begin{Remark}\label{knotted}
For a graph $G$, let us denote the set of all cycles of $G$ by $\Gamma(G)$, and the set of all $p$-cycles of $G$ by $\Gamma_{p}(G)$. A subset $\Gamma$ of $\Gamma(G)$ is said to be {\it knotted} if for any element $f$ in ${\rm SE}(G)$ there exists an element $\gamma$ of $\Gamma$ such that the knot $f(\gamma)$ is nontrivial, and a knotted set $\Gamma$ is said to be {\it minimally knotted} if every proper subset of $\Gamma$ is not knotted. Note that $G$ is said to be {\it intrinsically knotted} if there exists a knotted subset $\Gamma$ of $\Gamma(G)$. It is known that $\Gamma_{7}(K_{7})$ is minimally knotted \cite{CG83}, and for $n\ge 8$, $\Gamma_{n}(K_{n})$ is also knotted \cite{BBFHL07}, but an example of a minimally knotted subset of $\Gamma_{n}(K_{n})$ has not been known yet \cite[Problem 5.3]{ildt12}. See \cite[\S 5]{ildt12} for related open problems. 
\end{Remark}

\section*{Acknowledgment}

The author is grateful to Professors Ayumu Inoue and Kouki Taniyama for their valuable comments.

{\normalsize
}

\end{document}